\documentclass[11pt]{article}
\usepackage{palatino}
\usepackage{amsmath}
\usepackage{amsthm}
\usepackage{amssymb}
\usepackage{latexsym}
\usepackage{pstricks}
\usepackage{graphicx, pst-plot, pst-node, pst-text, pst-tree}
\usepackage{titlefoot}
\usepackage{titlesec}
\usepackage[small,it]{caption}
\setlength{\captionmargin}{0.4in}
\setlength{\abovecaptionskip}{-5pt}
\usepackage{pigpen}

\usepackage{color}
\definecolor{lightgray}{rgb}{0.8, 0.8, 0.8}
\definecolor{darkgray}{rgb}{0.7, 0.7, 0.7}
\definecolor{darkblue}{rgb}{0, 0, .4}

\usepackage[bookmarks]{hyperref}
\hypersetup{
        colorlinks=true,
        linkcolor=darkblue,
        anchorcolor=darkblue,
        citecolor=darkblue,
        urlcolor=darkblue,
        pdfpagemode=UseThumbs,
        pdftitle={Counting (3+1)-avoiding permutations},
        pdfsubject={Combinatorics},
        pdfauthor={Atkinson, Sagan, and Vatter},
}

\newtheorem{theorem}{Theorem}[section]
\newtheorem{proposition}[theorem]{Proposition}
\newtheorem{lemma}[theorem]{Lemma}

\newtheorem{corollary}[theorem]{Corollary}

\newcounter{todocounter}

\newcommand{\Av}{\operatorname{Av}}

\newcommand{\C}{\mathcal{C}}
\newcommand{\D}{\mathcal{D}}

%


%
%
%
%

%
%
\newcommand{\tilenla}{\raisebox{-2pt}{\pigpenfont A}}
\newcommand{\tilenlb}{\raisebox{-2pt}{\pigpenfont J}}
\newcommand{\tilelna}{\raisebox{-2pt}{\pigpenfont I}}
\newcommand{\tilelnb}{\raisebox{-2pt}{\pigpenfont R}}

\newcommand{\1}{{\bf 1}}
\newcommand{\2}{{\bf 2}}
\newcommand{\3}{{\bf 3}}

\newcommand{\ba}{{\bf a}}
\newcommand{\bb}{{\bf b}}

\setlength{\textwidth}{6in}
\setlength{\textheight}{8in}
\setlength{\topmargin}{0in}
\setlength{\headsep}{0.25in}
\setlength{\headheight}{0.25in}
\setlength{\oddsidemargin}{0.25pt}
\setlength{\evensidemargin}{0.25pt}
\makeatletter
\newfont{\footsc}{cmcsc10 at 8truept}
\newfont{\footbf}{cmbx10 at 8truept}
\newfont{\footrm}{cmr10 at 10truept}
\pagestyle{plain}

\renewenvironment{abstract}%
                {
                  \begin{list}{}%
                     {\setlength{\rightmargin}{1in}%
                      \setlength{\leftmargin}{1in}}%
                   \item[]\ignorespaces\begin{small}}%
                 {\end{small}\unskip\end{list}}

\amssubj{05A05, 05A15}
\keywords{algebraic generating function, permutation class, restricted permutation, simple permutation, substitution decomposition, $(3+1)$-free}

\newpagestyle{main}[\small]{
        \headrule
        \sethead[\usepage][][]
        {\sc Counting $\mathbf{(3+1)}$-Avoiding Permutations}{}{\usepage}}

\title{\sc Counting $\mathbf{(3+1)}$-Avoiding Permutations}
\author{%
M. D. Atkinson\\[-0.25ex]
\small Department of Computer Science\\[-0.5ex]
\small University of Otago\\[-0.5ex]
\small Dunedin, New Zealand\\[-0.5ex]
\small {\tt mike@cs.otago.ac.nz}\\[1.5ex]
Bruce E. Sagan\\[-0.25ex]
\small Department of Mathematics\\[-0.5ex]
\small Michigan State University\\[-0.5ex]
\small East Lansing, Michigan, USA\\[-0.5ex]
\small {\tt sagan@math.msu.edu}\\[1.5ex]
Vincent Vatter\\[-0.25ex]
\small Department of Mathematics\\[-0.5ex]
\small University of Florida\\[-0.5ex]
\small Gainesville, Florida, USA\\[-0.5ex]
\small {\tt vatter@ufl.edu}\\[1.5ex]
}

\titleformat{\section}
        {\large\sc}
        {\thesection.}{1em}{}   

\date{}

\begin{document}
\maketitle

\pagestyle{main}

\begin{abstract}
A poset is {\it $(\3+\1)$-free\/} if it contains no induced subposet isomorphic to the disjoint union of a 3-element chain and a 1-element chain.  These posets are of interest because of their connection with interval orders and their appearance in the $(\3+\1)$-free Conjecture of Stanley and Stembridge.   The dimension 2 posets $P$ are exactly the ones which have an associated permutation $\pi$ where $i\prec j$ in $P$ if and only if $i<j$ as integers and $i$ comes before $j$ in the one-line notation of $\pi$.  So we say that a permutation $\pi$ is {\it $(\3+\1)$-free\/} or {\it $(\3+\1)$-avoiding\/} if its poset is $(\3+\1)$-free.  This is equivalent to $\pi$ avoiding the permutations $2341$ and $4123$ in the language of pattern avoidance.  We give a complete structural characterization of such permutations.  This permits us to find their generating function.
\end{abstract}

\section{Introduction}

The permutation $\pi$ of $[n]=\{1,2,\dots,n\}$ {\it contains\/} the permutation $\sigma$ of $[k]$
if $\pi$ has a subsequence of length $k$ order isomorphic to $\sigma$, and such a subsequence is called an {\it occurrence\/}, or {\it copy\/}, of $\sigma$.  For example, $\pi=391867452$ (written in list, or one-line notation) contains $\sigma=51342$, as can be seen by considering the subsequence $91672$ ($=\pi(2),\pi(3),\pi(5),\pi(6),\pi(9)$).  If $\pi$ does not contain $\sigma$ we say that $\pi$ \emph{avoids} $\sigma$. A {\it permutation class\/}, sometimes abbreviated to simply \emph{class}, is a downset of permutations under this order; thus if $\C$ is a permutation class, $\pi\in\C$, and $\pi$ contains $\sigma$, then $\sigma\in\C$.  Every permutation class can be described by the minimal permutations which are \emph{not} in the class.  We call such a set a \emph{basis}, and denote by $\Av(B)$ the class with basis $B$.

Given a class $\C$, we denote by $\C_n$ the set of permutations in $\C$ of length $n$.  It is natural to ask for the enumeration of $\C$ and this is usually answered in terms of its generating function,
$$
\sum_{\pi\in\C} x^{|\pi|}=\sum_{n\ge 0} |\C_n|x^n.
$$
The class we will consider in this paper is motivated by ideas in the theory of posets (partially ordered sets).  Call a poset {\it $(\ba+\bb)$-free\/} if it contains no induced subposet isomorphic to a disjoint union of an $a$-element chain and a $b$-element chain.  Fishburn~\cite{intransitive:indifference:} characterized the $(\2+\2)$-free posets as those which could be modeled by intervals of real numbers, where we let $[a,b]<[c,d]$ if and only if $b<c$ so that $[a,b]$ is completely to the left of $[c,d]$ on the real line.  He also characterized the posets which are both $(\2+\2)$- and $(\3+\1)$-free as those interval orders where all the intervals have length one.  But until more recently there has been no characterization of $(\3+\1)$-free posets.  These posets also come up in the $(\3+\1)$-free Conjecture of Stembridge and Stanley~\cite{stanley:on-immanants-of:}.  Stanley~\cite{stanley:a-symmetric:function:generalization:} defined a symmetric function generalization $X_G$ of the chromatic polynomial of a graph $G$.  The conjecture in question states that if one takes the incomparability graph $G$ of a $(\3+\1)$-free poset (making two vertices adjacent in the graph if the corresponding elements are incomparable in the poset) and expresses $X_G$ in the elementary symmetric function basis, then all the coefficients are nonnegative.  To date there has been only partial progress on this question by Gasharov~\cite{incomparability:graphs:}, Gebhard and Sagan~\cite{a:chromatic:symmetric:function:}, and Lee and Sagan~\cite{an:algorithmic:sign:reversing:}.

Every permutation $\pi$ gives rise to a poset $P_\pi$ by letting $i\prec j$ in $P_\pi$ if and only if $i<j$ and $i$ appears to the left of $j$ in $\pi$.  The posets arising this way are exactly those of dimension 2.  Call a permutation $\pi$ {\it $(\3+\1)$-free\/} or {\it $(\3+\1)$-avoiding\/} if its poset is $(\3+\1)$-free.   Note that $P_\pi=\3+\1$ precisely when $\pi=2341$ or $4123$.  So the class of $(\3+\1)$-free permutations is $\Av(2341,4123)$.  In this paper we completely characterize the elements in this class.  Using this characterization, we are able to compute the corresponding generating function.  The hope is that this viewpoint might also be useful in making progress on the $(\3+\1)$-free Conjecture in the case of dimension 2 posets.  We should  mention that Skandera~\cite{a:characterization:of:three:plus:one:free:} has a useful characterization of all $(\3+\1)$-free posets.  But his involves conditions on the entries of the square of the antiadjacency matrix of the poset and so seems to be quite different from ours.


A secondary motivation for enumerating the class $\Av(2341,4123)$ is that it belongs to a family of classes which have proved to be a fertile testing ground for different enumerative techniques.  For bases $B$ consisting of a single permutation of length at most $4$, exact enumerations for $\Av(B)$ are known except in the notable case of $B=\{1324\}$ (or its symmetry, $B=\{4231\}$).  (Here only lower and upper bounds are known, see Albert, Elder, Rechnitzer, Westcott, and Zabrocki~\cite{albert:on-the-wilf-sta:} and B\'ona~\cite{bona:the-exponential-up:}.)  For bases $B$ consisting of two permutations, exact enumerations are known in the case where one element of $B$ has length at most $3$ and the other has length at most $4$.  However, in the case where $B$ consists of two permutations of length $4$, much less is known.

The permutation containment relation is invariant under the $8$ symmetries generated by reversal, complementation, and inversion.  These symmetries can be used to cut down the number of cases; in particular, the ${4!\choose 2}$ different sets $B$ consisting of two permutations of length $4$ split into $56$ different \emph{symmetry classes}.  Of these $56$ essentially different classes, it is known that there are $38$ different enumerations, which follows from a long string of papers \cite{bona:the-permutation:,kremer:permutations-wi:, kremer:postscript:-per:, kremer:finite-transiti:, le:wilf-classes-of:}.  Only about half of these have been enumerated.

The approach we use here to enumerate $\Av(2341,4123)$ is based on simple permutations, so we briefly recall the salient definitions and properties.  An \emph{interval} of a permutation $\pi=\pi(1)\pi(2)\cdots \pi(n)$ is a contiguous subsequence $\pi(i)\pi(i+1)\cdots \pi(j)$ whose values form a contiguous set of integers. If a permutation has no intervals except for itself and its singletons then it is said to be \emph{simple}.  For example, $871329456$ has nontrivial intervals $87$, $132$, and $456$, while $31524$ is simple.  Figure~\ref{fig-example-simples} shows the plots of three further simple permutations; in this diagram and, in subsequent similar diagrams, the dots are placed at cartesian coordinates $(i,\pi(i))$.

\begin{figure}
\begin{center}
\begin{tabular}{ccccc}
\psset{xunit=0.01in, yunit=0.01in}
\psset{linewidth=0.005in}
\begin{pspicture}(0,0)(100,100)
\psline[linecolor=darkgray,linestyle=solid,linewidth=0.02in](55,0)(55,105)
\psaxes[dy=10,Dy=1,dx=10,Dx=1,tickstyle=bottom,showorigin=false,labels=none](0,0)(100,100)
\pscircle*(10,90){0.04in}
\pscircle*(20,70){0.04in}
\pscircle*(30,50){0.04in}
\pscircle*(40,30){0.04in}
\pscircle*(50,10){0.04in}
\pscircle*(60,100){0.04in}
\pscircle*(70,80){0.04in}
\pscircle*(80,60){0.04in}
\pscircle*(90,40){0.04in}
\pscircle*(100,20){0.04in}
\end{pspicture}
&\rule{10pt}{0pt}&
\psset{xunit=0.01in, yunit=0.01in}
\psset{linewidth=0.005in}
\begin{pspicture}(0,0)(100,100)
\psline[linecolor=darkgray,linestyle=solid,linewidth=0.02in](0,55)(105,55)
\psaxes[dy=10,Dy=1,dx=10,Dx=1,tickstyle=bottom,showorigin=false,labels=none](0,0)(100,100)
\pscircle*(10,50){0.04in}
\pscircle*(30,40){0.04in}
\pscircle*(50,30){0.04in}
\pscircle*(70,20){0.04in}
\pscircle*(90,10){0.04in}
\pscircle*(100,60){0.04in}
\pscircle*(80,70){0.04in}
\pscircle*(60,80){0.04in}
\pscircle*(40,90){0.04in}
\pscircle*(20,100){0.04in}
\end{pspicture}
&\rule{10pt}{0pt}&
\psset{xunit=0.01in, yunit=0.01in}
\psset{linewidth=0.005in}
\begin{pspicture}(0,0)(100,100)
\psaxes[dy=10,Dy=1,dx=10,Dx=1,tickstyle=bottom,showorigin=false,labels=none](0,0)(100,100)
\pscircle*(10,40){0.04in}
\pscircle*(20,10){0.04in}
\pscircle*(30,30){0.04in}
\pscircle*(40,70){0.04in}
\pscircle*(50,50){0.04in}
\pscircle*(60,20){0.04in}
\pscircle*(70,100){0.04in}
\pscircle*(80,80){0.04in}
\pscircle*(90,60){0.04in}
\pscircle*(100,90){0.04in}
\end{pspicture}
\end{tabular}
\end{center}
\caption[]{The two simple permutations on the left are the $123$-avoiding \emph{parallel alternations}.  The permutation shown on the right is another simple permutation in $\Av(2341,4123)$.}\label{fig-example-simples}
\label{parallelalternations}
\end{figure}
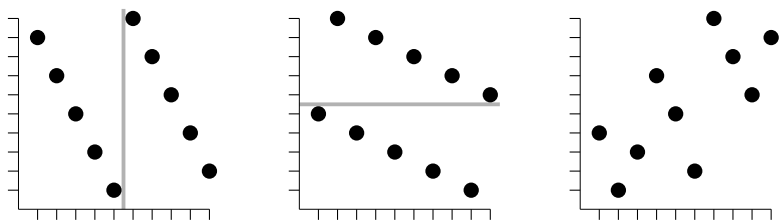

Simple permutations are precisely those that do not arise from a non-trivial inflation, in the following sense.  Let $\sigma$ be any permutation of length $m$ and $\alpha_1,\alpha_2,\ldots,\alpha_m$ any sequence of permutations.  Then the \emph{inflation} of $\sigma$ by $\alpha_1,\alpha_2,\ldots,\alpha_m$, denoted  $\sigma[\alpha_1,\alpha_2,\ldots,\alpha_m]$, is the permutation of length $|\alpha_1|+\cdots+|\alpha_m|$ which decomposes into $m$ segments $\alpha'_1\alpha'_2\cdots\alpha'_m$ where each segment $\alpha'_i$ is an interval which is order isomorphic to $\alpha_i$, and the sequence $a_1a_2\cdots a_n$ formed by any (and hence every) choice of $a_i$ from $\alpha'_i$ is order isomorphic to $\sigma$.  For example the inflation of $3142$ by $21, 132, 1, 123$ is
\[3142[21, 132, 1, 123]=87\ 132\ 9\ 456.\]

The precise connection between simple permutations and inflations is furnished by a result from Albert and Atkinson~\cite{albert:simple-permutat:}.

\begin{lemma}\label{lem-simple-inflate}
For every permutation $\pi$ there is a unique simple permutation $\sigma$ such that
$$
\pi=\sigma[\alpha_1,\alpha_2,\ldots,\alpha_m].
$$
Furthermore, except when $\sigma=12$ or $\sigma=21$, the intervals of $\sigma$ that correspond to $\alpha_1,\alpha_2,\ldots,\alpha_m$ are uniquely determined.  In the case that $\sigma=12$ (respectively $\sigma=21$), the intervals are unique so long as we require the first of the two intervals to be \emph{sum} (respectively, \emph{skew}) \emph{indecomposable}, which means that it cannot be decomposed further as a nontrivial inflation of $12$ (respectively, of $21$).
\end{lemma}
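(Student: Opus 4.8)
The backbone of the proof is the closure of intervals under Boolean operations on overlapping sets. Say that two intervals $I,J$ of $\pi$ \emph{overlap} if $I\cap J\neq\emptyset$ but neither contains the other. The fact I would record first is that in this case each of $I\cap J$, $I\cup J$, $I\setminus J$, and $J\setminus I$ is again an interval of $\pi$: being contiguous as a set of positions, and being contiguous as a set of values, are each preserved by these operations when applied to overlapping sets of integers.

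For existence, the case $|\pi|\leq1$ is trivial, so assume $|\pi|\geq2$ and split into three mutually exclusive cases. First note $\pi$ cannot be both sum- and skew-decomposable: the shorter of a prefix carrying the values $\{1,\dots,j\}$ and a prefix carrying the values $\{n-j'+1,\dots,n\}$ would lie inside the longer and so carry values from the wrong end. If $\pi$ is sum-decomposable, let $\alpha_1$ be its shortest prefix whose values form $\{1,\dots,|\alpha_1|\}$; then $\alpha_1$ is sum indecomposable, $\pi=12[\alpha_1,\alpha_2]$, and we take $\sigma=12$; the skew case is symmetric with $\sigma=21$. If $\pi$ is neither, let $\M$ be the set of intervals of $\pi$ that are maximal among the proper intervals (those $\neq[n]$). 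Two distinct members $I,J$ of $\M$ must be disjoint: if they overlapped then $I\cup J$ would be a proper interval strictly containing $I$ unless $I\cup J=[n]$, and in that case cutting $[n]$ into the three consecutive intervals $I\setminus J$, $I\cap J$, $J\setminus I$ forces $\pi$ to be sum- or skew-decomposable; and if one contained the other, maximality forces equality. Since singletons are proper intervals, $\M$ covers and hence partitions $[n]$ into blocks $\alpha'_1,\dots,\alpha'_m$ with $m\geq2$. Letting $\sigma$ be the pattern of $\pi$ on these blocks (and $\alpha_1,\dots,\alpha_m$ the patterns of the blocks), a nontrivial interval of $\sigma$ would pull back to a proper interval of $\pi$ strictly containing a block, contradicting maximality; so $\sigma$ is simple. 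It is not $12$ or $21$ (else $\pi$ would be sum- or skew-decomposable), and there is no simple permutation of length $3$, so $\sigma$ has length $\geq4$.

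For uniqueness I would again proceed case by case. If $\pi$ is sum-decomposable, then in any expression $\pi=12[\beta_1,\beta_2]$ the length $|\beta_1|$ must be a partial sum of the sizes of the sum indecomposable components of $\pi$; requiring $\beta_1$ sum indecomposable pins $|\beta_1|$ to the size of the first component, so $\beta_1,\beta_2$ are determined, and the skew case is symmetric. The crux is the claim that if $\pi=\sigma[\alpha_1,\dots,\alpha_m]$ with $\sigma$ simple of length $m\geq4$, then the blocks $\alpha'_1,\dots,\alpha'_m$ are \emph{exactly} the maximal proper intervals of $\pi$. Granting this, $\sigma$ is forced to be the pattern on $\M$, hence unique; moreover a simple permutation of length $\geq3$ can have neither $1$ nor its length as first value (either would create a nontrivial interval), so such a $\pi$ is neither sum- nor skew-decomposable and the three cases cannot overlap. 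To prove the claim, suppose $K$ is an interval with $\alpha'_i\subsetneq K\subsetneq[n]$. Repeatedly enlarging $K$ by any block it meets only partially keeps it an interval (overlapping value-sets have contiguous union), so the union $\widehat{K}$ of all blocks meeting $K$ is an interval; the corresponding block-indices then form an interval of $\sigma$, which by simplicity must be all of $[m]$, so $K$ meets every block. Since $K\neq[n]$ it meets the first and last blocks properly, and reading off value-levels — $K$ contains whole value-blocks at an interior run of ranks and partial value-blocks only at the two ranks flanking that run — forces $\{\sigma(1),\sigma(m)\}=\{1,m\}$ and $\{\sigma(2),\dots,\sigma(m-1)\}=\{2,\dots,m-1\}$, so positions $2,\dots,m-1$ of $\sigma$ form an interval of size $m-2$; simplicity then forces $m=3$, again impossible.

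I expect this last step — ruling out an interval that straddles the blocks of an inflation by a simple permutation — to be the main obstacle, since it is the only place where simplicity of $\sigma$ is exploited at full strength and it requires careful bookkeeping of how a contiguous set of values can meet the value-blocks; everything else is routine manipulation with the overlap property.
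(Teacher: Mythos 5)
The paper offers no proof of this lemma at all: it is quoted verbatim from Albert and Atkinson's paper on simple permutations, so there is no in-paper argument to compare yours against. Judged on its own, your proof is the standard substitution-decomposition argument and is essentially correct: the closure of overlapping intervals under union, intersection and difference; the observation that no permutation is both sum- and skew-decomposable; the construction of the quotient from the maximal proper intervals in the indecomposable case; and the key claim that when $\pi=\sigma[\alpha_1,\dots,\alpha_m]$ with $\sigma$ simple of length $m\geq 4$, the blocks $\alpha'_1,\dots,\alpha'_m$ are exactly the maximal proper intervals of $\pi$. You are also right that this last claim is where all the work is. The one inaccuracy is in its final step: from $K\neq[n]$ you conclude that $K$ meets \emph{both} the first and last blocks properly, but $K$ could contain one of $\alpha'_1,\alpha'_m$ entirely and be cut off only at the other end. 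This does not break the argument: in those cases the set of block indices whose blocks lie entirely inside $K$ is $\{1,\dots,m-1\}$ or $\{2,\dots,m\}$ rather than $\{2,\dots,m-1\}$, and its image under $\sigma$ is still an interval of $[m]$ of size between $2$ and $m-1$, so simplicity of $\sigma$ is violated just the same — but the trichotomy should be stated rather than asserting the middle case only. (You should also make explicit, as your enlargement argument already implicitly shows, that a maximal proper interval meeting two blocks without containing either whole also yields a nontrivial interval of $\sigma$; this is needed to conclude that \emph{every} maximal proper interval is a block, not merely that every block is maximal.) With those small repairs the proof is complete.
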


One important feature of $\Av(2341,4123)$ is that it is a \emph{sum closed} class, meaning that if $\sigma$ and $\pi$ lie in $\Av(2341,4123)$ then $12[\sigma,\pi]$ also lies in $\Av(2341,4123)$.  The generating function for any sum closed class  is easily seen to be $1/(1-g)$, where $g$ is the generating function for the non-empty sum indecomposable permutations in the class.

A final lemma which we use was proved by Atkinson~\cite{atkinson:restricted-perm:}.

\begin{proposition}\label{propAv1233412types}
Every permutation in $\Av(123, 3412)$ is either a horizontal or vertical juxtaposition of two decreasing permutations (see Figure~\ref{Av1233412types}).
\end{proposition}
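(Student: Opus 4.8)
The plan is to unwind what $123$-avoidance alone forces and then build the two decreasing pieces explicitly, bringing in $3412$-avoidance only at the end. First I would record the standard fact that a permutation lies in $\Av(123)$ exactly when every entry is a left-to-right minimum or a right-to-left maximum (if $\pi(j)$ is neither, a smaller entry to its left and a larger one to its right complete a $123$). Two consequences follow immediately. (a) Any two entries of value exceeding $\pi(1)$ occur in decreasing order of position---otherwise they form a $123$ together with $\pi(1)$---so the entries of value $>\pi(1)$ always form a decreasing subsequence; hence to present $\pi$ as a vertical juxtaposition, splitting the value range at $\pi(1)$, it suffices to show that the entries of value $\le\pi(1)$ also occur in decreasing order of position. (b) Writing $\nu=\pi^{-1}(n)$ and $\mu=\pi^{-1}(1)$: if $\pi(1)\neq n$ then $\nu\ge 2$ and the entries in positions $1,\dots,\nu-1$ are decreasing with all values $\le\pi(1)$ (an ascent there, capped by $n$ at position $\nu$, would be a $123$); dually, if $\pi(n)\neq 1$ then $\mu\le n-1$ and the entries in positions $\mu+1,\dots,n$ are decreasing.

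Next I would dispose of two reductions by induction on the length. If $\pi(1)=n$, delete this entry; the remaining permutation lies in $\Av(123,3412)$ and is inductively a horizontal or vertical juxtaposition of two decreasing permutations, and prepending the maximum $n$ in front of the appropriate decreasing piece restores the same shape in either case. Symmetrically, if $\pi(n)=1$, delete the last entry and re-append $1$ at the end of the relevant piece afterward. So henceforth I may assume $\pi(1)\neq n$ and $\pi(n)\neq 1$, so that both halves of (b) apply.

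Now I would split on the relative order of $\mu$ and $\nu$. If $\mu<\nu$, then position $\mu$ lies inside the decreasing block $1,\dots,\nu-1$, and since $\pi(\mu)=1$ is the globally least value it must sit at the end of that block, forcing $\mu=\nu-1$; then positions $1,\dots,\nu-1$ and positions $\nu,\dots,n$ are each decreasing, so $\pi$ is a horizontal juxtaposition of two decreasing permutations. The remaining, and principal, case is $\nu<\mu$, where I claim the entries of value $\le\pi(1)$ occur in decreasing order. These consist of the decreasing block $\pi(1)>\cdots>\pi(\nu-1)$ followed by the entries of value $<\pi(1)$ lying after position $\nu$ (value $1$ is one of these, so there is at least one). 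Those trailing small entries are themselves decreasing, for an ascent $\pi(a)<\pi(b)$ among them, with $\nu<a<b$ and $\pi(a),\pi(b)<\pi(1)$, would make $\pi(1),n,\pi(a),\pi(b)$ in positions $1<\nu<a<b$ an occurrence of $3412$. Thus the only possible obstruction is at the junction: that the first small entry $\pi(c)$ occurring after position $\nu$ satisfies $\pi(c)>\pi(\nu-1)$.

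Ruling this out is the heart of the matter. If $\pi(c)>\pi(\nu-1)$ with $\nu-1<c$, then $\pi(c)$ is not a left-to-right minimum, hence it is a right-to-left maximum, so every entry after position $c$ has value below $\pi(c)$. By minimality of $c$ the entries strictly between positions $\nu$ and $c$ all have value $>\pi(1)$, so among positions $\le c$ the only entries of value below $\pi(c)$ form a suffix of the block $1,\dots,\nu-1$; since value $1$ sits at position $\mu>\nu$, it is not in that suffix, so $1$ occurs after position $c$. But the entries after $c$ all have value $\le\pi(c)\le\pi(1)$ and therefore, being small entries past position $\nu$, occur in decreasing order, so $1$---the least value---must occupy the very last position, contradicting $\pi(n)\neq 1$. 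Hence the junction behaves, the entries of value $\le\pi(1)$ are decreasing, and $\pi$ is a vertical juxtaposition of two decreasing permutations, completing the induction. I expect the bookkeeping in this final step---tracking where $1$ and $n$ sit relative to $c$ and to the initial decreasing block---to be the only genuinely delicate point, since everything else is essentially forced.
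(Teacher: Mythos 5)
The paper does not actually prove this proposition---it is imported from Atkinson's earlier work \cite{atkinson:restricted-perm:}---so there is no in-paper argument to compare yours against; I have therefore checked your proof on its own terms, and it is correct and self-contained. The skeleton holds up: the reductions for $\pi(1)=n$ and $\pi(n)=1$ are legitimate since $\Av(123,3412)$ is closed under deleting entries and prepending a global maximum (or appending a global minimum) to either kind of juxtaposition preserves its shape; observation (a) correctly reduces the vertical case to showing the entries of value at most $\pi(1)$ are decreasing; the case $\mu<\nu$ does force $\mu=\nu-1$ and hence a horizontal juxtaposition; the $3412$ occurrence $\pi(1),n,\pi(a),\pi(b)$ that makes the trailing small entries decreasing is valid precisely because $\nu>1$ in the surviving case; and the final contradiction is airtight---once $\pi(c)>\pi(\nu-1)$ forces $\pi(c)$ to be a right-to-left maximum, every entry after position $c$ is a trailing small entry, these are decreasing and include the value $1$, so $\pi(n)=1$, which was excluded. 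The only step a reader must pause over is the implicit claim that the entries of value at most $\pi(1)$ are exactly the initial block at positions $1,\dots,\nu-1$ together with the trailing small entries; this is fine because (b) gives that the whole initial block has values at most $\pi(1)$ while position $\nu$ carries $n$. A pleasant byproduct of your organization is that it isolates exactly where $3412$-avoidance is needed (only to make the trailing small entries decrease), everything else being forced by $123$-avoidance alone.
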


\begin{figure}
\begin{footnotesize}
\begin{center}
\begin{tabular}{ccc}
\psset{xunit=0.007in, yunit=0.007in}
\psset{linewidth=0.005in}
\begin{pspicture}(0,-30)(120,60)
\multirput(0,0)(60,0){3}{\psline[linecolor=darkgray,linestyle=solid,linewidth=0.02in]{c-c}(0,0)(0,60)}
\multirput(0,0)(0,60){2}{\psline[linecolor=darkgray,linestyle=solid,linewidth=0.02in]{c-c}(0,0)(120,0)}
\rput[c](30,30){$\D$}
\rput[c](90,30){$\D$}
\end{pspicture}
&\rule{0.3in}{0pt}&
\psset{xunit=0.007in, yunit=0.007in}
\psset{linewidth=0.005in}
\begin{pspicture}(0,0)(60,120)
\multirput(0,0)(60,0){2}{\psline[linecolor=darkgray,linestyle=solid,linewidth=0.02in]{c-c}(0,0)(0,120)}
\multirput(0,0)(0,60){3}{\psline[linecolor=darkgray,linestyle=solid,linewidth=0.02in]{c-c}(0,0)(60,0)}
\rput[c](30,30){$\D$}
\rput[c](30,90){$\D$}
\end{pspicture}
\end{tabular}
\end{center}
\end{footnotesize}
\caption{The two types of permutations in $\Av(123, 3412)$.  Throughout this paper, $\D$ denotes the class of decreasing permutations, i.e., $\Av(21)$.}
\label{Av1233412types}
\end{figure}
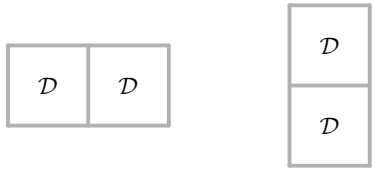

\section{Categories of Simple Permutations in $\Av(2341, 4123)$}

In this section we categorize simple permutations in $\Av(2341, 4123)$ according to whether they contain or avoid the permutations 123 and 3412.

\begin{proposition}\label{propsimpletypes}
Let $\sigma$ be any simple permutation of $\Av(2341, 4123)$ that contains both $123$ and $3412$.  Then $\sigma=5274163$.
\end{proposition}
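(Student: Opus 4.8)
The plan is to establish both implications; the forward one is a finite check. First I would verify that $5274163$ is simple (no block of consecutive positions of length $2,\dots,6$ carries a set of consecutive values), that it avoids $2341$ and $4123$ (its unique increasing subsequence of length three sits in positions $2,4,6$, carrying the values $2,4,6$, and no fourth point can be adjoined to it to make either forbidden pattern), and that it contains $123$ (positions $2,4,6$) and $3412$ (positions $1,3,5,7$, with values $5,7,1,3$). It will also be convenient to record that $\Av(2341,4123)$ is closed under the inverse and reverse--complement symmetries --- each of these interchanges the two basis permutations --- and that $5274163$ is fixed by both; this will let me cut the later casework down by roughly a factor of four.

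For the converse, let $\sigma$ be a simple permutation of $\Av(2341,4123)$ containing both $123$ and $3412$. I would fix an occurrence of $3412$, at positions $a<b<c<d$ and with values $v_1=\sigma(c)<v_2=\sigma(d)<v_3=\sigma(a)<v_4=\sigma(b)$, chosen extremally: take $v_4$ as large as possible, then $v_1$ as small as possible, then (say) $b$ as small as possible, then $c$ as large as possible, then $a$ as large and $d$ as small as possible. The four points of this occurrence cut the ambient grid into $25$ open cells (five columns $x<a,\ a<x<b,\ b<x<c,\ c<x<d,\ x>d$, crossed with five rows $y<v_1,\ v_1<y<v_2,\ v_2<y<v_3,\ v_3<y<v_4,\ y>v_4$); every other point of $\sigma$ lies in exactly one. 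The three extra points of $5274163$ occupy a ``staircase'' of three of these cells along the anti-diagonal, namely (columns $a,b$; rows $v_1,v_2$), (columns $b,c$; rows $v_2,v_3$), and (columns $c,d$; rows $v_3,v_4$).

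The goal is then to show that, for the extremal occurrence, every point of $\sigma$ lies in one of those three staircase cells, and that each staircase cell contains exactly one point; that forces $\sigma = 5274163$. I would clear the cells in three rounds. In the first round, a point lying in one of the columns $a<x<c$ above row $v_4$ can be substituted for $b$ to give a copy of $3412$ with a strictly larger ``$4$'', contradicting the extremal choice, and the few remaining cells above $v_4$ (or, symmetrically, below $v_1$) complete a $2341$ or a $4123$ with three of the points $a,b,c,d$. In the second round I would eliminate the interior cells not on the staircase: each such cell contains at most one configuration to treat, and each offending point completes a $2341$ or a $4123$ with three of $a,b,c,d$ --- for instance a point $p$ with $a<p<b$ and $v_3<\sigma(p)<v_4$ makes $a,p,b$ an increasing triple followed by the smaller value $\sigma(c)$, hence a $2341$ --- and here the inverse and reverse--complement symmetries (which fix the basis and $5274163$ and permute the cells) let me handle only about a quarter of them by hand, the rest falling out of the extremal conditions. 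In the third round I would rule out the ``outer'' cells, those in the column $x<a$ or the column $x>d$: a point there cannot always be detected by pattern containment, so here I would invoke simplicity together with the extremal choice, peeling off a proper nontrivial interval of $\sigma$ to reach a contradiction. Once all points of $\sigma$ are confined to the three staircase cells, the hypothesis that $\sigma$ contains $123$ forces all three to be occupied --- the cells carry strictly increasing values from left to right, so with at most two occupied there is no increasing subsequence of length three (this is exactly how the simple permutation $35142$, which contains $3412$ but not $123$, has only one of them filled) --- and simplicity forbids two points in one cell, since they would be an interval of length two. Hence each staircase cell carries exactly one point and $\sigma = 5274163$.

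I expect the third round to be the main obstacle: bounding all of $\sigma$ inside the column range and value range of the occurrence is not a pure pattern-avoidance statement, and producing the required interval explicitly --- while making full use of the extremal choice of the occurrence --- is the delicate part. Coordinating this with the interior case analysis of the second round, and keeping the symmetry reduction honest so that no cell is missed, is where the bookkeeping will be heaviest.
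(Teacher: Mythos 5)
Your overall architecture matches the paper's: fix an extremal occurrence of $3412$, cut the plane into a $5\times 5$ grid, clear cells, and let the containment of $123$ pin down the three ``staircase'' points. But there is a genuine gap in the order in which you deploy the hypotheses. You plan to confine \emph{every} point of $\sigma$ to the three staircase cells in rounds 1--3, using only extremality, avoidance of $2341$ and $4123$, and simplicity, and to invoke the containment of $123$ only afterwards, to show the staircase cells are occupied. That confinement claim is false. Consider $\sigma=536142$: it is simple, it avoids $123$ (hence avoids $2341$ and $4123$, since each of those contains $123$), and it contains $3412$. Its extremal occurrence of $3412$ under your own rules (maximize $v_4$, then minimize $v_1$, then the positional tie-breaks) is the subsequence $3,6,1,2$ at positions $2,3,4,6$, and the entry $5$ at position $1$ then sits in an outer cell of the first column (left of the `$3$', with value strictly between $v_3$ and $v_4$) --- not in any staircase cell. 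Since this permutation satisfies every hypothesis your rounds 1--3 are allowed to use, no argument of that form can empty the outer cells; the only hypothesis separating $5274163$ from examples like $536142$ is the containment of $123$, and you have deferred it past the point where it is needed. This is exactly the round you flag as ``the main obstacle,'' and it is not merely delicate but impossible as structured.

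The paper's proof resolves this by using the $123$ hypothesis much earlier: it first observes that the top-left and bottom-right $2\times 2$ blocks of the grid cannot contain a copy of $123$ (that would create $2341$ or $4123$ with the fixed points), so the assumed copy of $123$ must consist of one entry from each staircase cell; it then plays a hypothetical point in the far corners against that located copy to produce $4123$ or $2341$, and only afterwards clears the remaining peripheral cells by interval arguments (a surviving point there would form a block with the `$4$' or `$1$'), which in turn forces the staircase cells to be singletons by simplicity. You would need to restructure your rounds along these lines. A secondary, repairable issue: your extremal choice (maximize $v_4$, minimize $v_1$) differs from the paper's (`$4$' and `$1$' vertically as close as possible), so the middle-row cells that the paper kills by vertical proximity must instead be killed by your positional tie-breaks; that does appear to work, but the bookkeeping is genuinely different and cannot be waved through.
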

\begin{proof}
Consider a simple permutation $\sigma\in\Av(2341,4123)$ that contains both 123 and 3412.  

Figure~\ref{propAv1233412types-structure-1} shows $\sigma$ with the $5\times 5$ grid defined by a copy of $3412$.  The unlabeled cells must be empty and the cells labeled $\D$ (with or without a subscript) must be decreasing; this is a direct consequence of the avoidance conditions.  Another consequence (not shown in Figure~\ref{propAv1233412types-structure-1} but shown in subsequent diagrams) is that every point in $\D_3$ is to the right of every point in $\D_1$, and every point in $\D_2$ is to the right of every point in $\D_4$.   If we choose the copy of $3412$ so that the `$4$' and the `$1$' points are as close (vertically) as possible then the two cells labeled $Z$ must be empty and the cell labeled $Y$ must be decreasing.

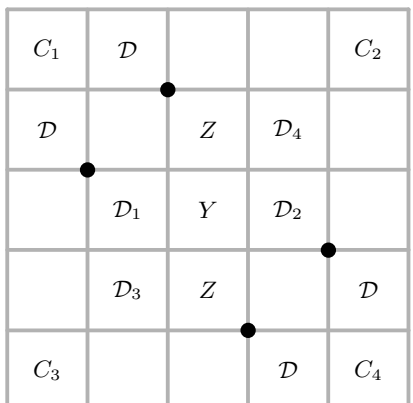
\begin{figure}
\begin{footnotesize}
\begin{center}
\psset{xunit=0.007in, yunit=0.007in}
\psset{linewidth=0.005in}
\begin{pspicture}(0,0)(300,300)
\multirput(0,0)(60,0){6}{\psline[linecolor=darkgray,linestyle=solid,linewidth=0.02in]{c-c}(0,0)(0,300)}
\multirput(0,0)(0,60){6}{\psline[linecolor=darkgray,linestyle=solid,linewidth=0.02in]{c-c}(0,0)(300,0)}
\pscircle*(60,180){0.04in}
\pscircle*(120,240){0.04in}
\pscircle*(180,60){0.04in}
\pscircle*(240,120){0.04in}
\rput[c](30,270){$C_1$}
\rput[c](30,210){$\D$}
\rput[c](30,30){$C_3$}
\rput[c](90,90){$\D_3$}
\rput[c](90,150){$\D_1$}
\rput[c](90,270){$\D$}
\rput[c](150,90){$Z$}
\rput[c](150,150){$Y$}
\rput[c](150,210){$Z$}
\rput[c](210,30){$\D$}
\rput[c](210,150){$\D_2$}
\rput[c](210,210){$\D_4$}
\rput[c](270,30){$C_4$}
\rput[c](270,90){$\D$}
\rput[c](270,270){$C_2$}
\end{pspicture}
\end{center}
\end{footnotesize}
\caption{The first structure diagram for the simple $3412$-containing permutation $\sigma\in\Av(4123, 2341)$.}
\label{propAv1233412types-structure-1}
\end{figure}

However rather more can be gleaned from the vertical proximity of the `$4$' and `$1$'.  Consider the two cells $\D_1$ and $\D_2$ that flank the center cell labeled $Y$.  There can be no increase from $\D_1$ to $Y$ nor from $Y$ to  $\D_2$ because any such increase would result in a copy of $3412$ with a closer `$4$' and `$1$'.  The diagram on the left of Figure~\ref{propAv1233412types-structure-23} displays these conditions.  Again in this diagram all cells that are not labeled are empty, and no claim is yet made about the four corner cells labeled $C_i$.

\begin{figure}
\begin{footnotesize}
\begin{center}
\begin{tabular}{ccc}
	\psset{xunit=0.007in, yunit=0.007in}
	\psset{linewidth=0.005in}
	\begin{pspicture}(0,0)(300,300)
	\multirput(0,0)(60,0){6}{\psline[linecolor=darkgray,linestyle=solid,linewidth=0.02in]{c-c}(0,0)(0,300)}
	\multirput(0,0)(0,60){6}{\psline[linecolor=darkgray,linestyle=solid,linewidth=0.02in]{c-c}(0,0)(300,0)}
	\pscircle*(60,180){0.04in}
	\pscircle*(120,240){0.04in}
	\pscircle*(180,60){0.04in}
	\pscircle*(240,120){0.04in}
	\psline[linecolor=darkgray,linestyle=solid,linewidth=0.02in]{c-c}(90,0)(90,180)
	\psline[linecolor=darkgray,linestyle=solid,linewidth=0.02in]{c-c}(210,120)(210,240)
	\psline[linecolor=darkgray,linestyle=solid,linewidth=0.02in]{c-c}(60,140)(300,140)
	\psline[linecolor=darkgray,linestyle=solid,linewidth=0.02in]{c-c}(60,160)(300,160)
	\rput[c](75,170){$\D_1$}
	\rput[c](105,90){$\D_3$}
	\rput[c](150,150){$\D$}
	\rput[c](195,210){$\D_4$}
	\rput[c](225,130){$\D_2$}
	\rput[c](30,270){$C_1$}
	\rput[c](30,210){$\D$}
	\rput[c](30,30){$C_3$}
	\rput[c](90,270){$\D$}
	\rput[c](210,30){$\D$}
	\rput[c](270,30){$C_4$}
	\rput[c](270,90){$\D$}
	\rput[c](270,270){$C_2$}
	\end{pspicture}
&\rule{0.3in}{0pt}&
	\psset{xunit=0.007in, yunit=0.007in}
	\psset{linewidth=0.005in}
	\begin{pspicture}(0,0)(300,300)
	\multirput(0,0)(60,0){6}{\psline[linecolor=darkgray,linestyle=solid,linewidth=0.02in]{c-c}(0,0)(0,300)}
	\multirput(0,0)(0,60){6}{\psline[linecolor=darkgray,linestyle=solid,linewidth=0.02in]{c-c}(0,0)(300,0)}
	\pscircle*(60,180){0.04in}
	\pscircle*(120,240){0.04in}
	\pscircle*(180,60){0.04in}
	\pscircle*(240,120){0.04in}
	\psline[linecolor=darkgray,linestyle=solid,linewidth=0.02in]{c-c}(90,0)(90,180)
	\psline[linecolor=darkgray,linestyle=solid,linewidth=0.02in]{c-c}(210,120)(210,240)
	\psline[linecolor=darkgray,linestyle=solid,linewidth=0.02in]{c-c}(60,140)(300,140)
	\psline[linecolor=darkgray,linestyle=solid,linewidth=0.02in]{c-c}(60,160)(300,160)
	\rput[c](75,170){$\D_1$}
	\rput[c](105,90){$\D_3$}
	\rput[c](150,150){$\D$}
	\rput[c](195,210){$\D_4$}
	\rput[c](225,130){$\D_2$}
	\rput[c](30,270){$C_1$}
	\rput[c](30,210){$\D$}
	\rput[c](90,270){$\D$}
	\rput[c](210,30){$\D$}
	\rput[c](270,30){$C_4$}
	\rput[c](270,90){$\D$}
	\end{pspicture}
\end{tabular}
\end{center}
\end{footnotesize}
\caption{The second and third structure diagrams for the simple $3412$-containing permutation $\sigma\in\Av(4123, 2341)$.}
\label{propAv1233412types-structure-23}
\end{figure}
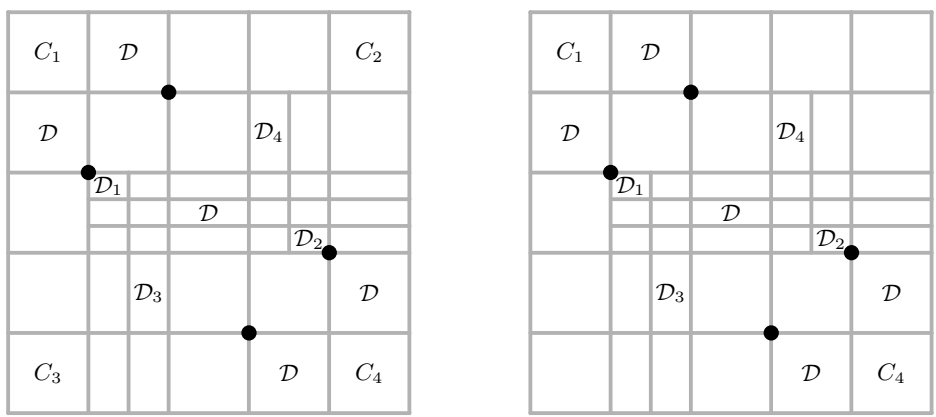

Notice that the central $\D$ cell is either empty or consists of a single point; if it were any larger it would comprise a non-trivial interval.  Now consider the cell labeled $C_1$ and the $\D$ cell to its right.  If any entry in this pair of cells was larger than any entry of $C_2$ we would have a copy $4123$.  Similarly, if any entry in the cell labeled $C_4$ or the $\D$ cell above it were to the right of an entry in the cell labeled $C_2$ then we would have a copy of $2341$.  Therefore the entries in the cell labeled $C_2$ must lie above and to the right of all other entries in $\sigma$, and so $C_2$ must be empty by simplicity.  Similarly, it can be seen that $C_3$ must be empty.  This gives the diagram on the right of Figure~\ref{propAv1233412types-structure-23}.

In this diagram the $2\times 2$ array of cells in the top-left cannot contain $123$ (since that would give a copy of $2341$).  Similarly, the $2\times 2$ array of cells in the bottom-right cannot contain $123$.  However, we have assumed that $\sigma$ contains a copy of $123$.  By inspection, this copy of $123$ must be formed by an entry in $\D_3$, an entry in the central $\D$ cell (which is known to be at most a singleton), and an entry in $\D_4$.  This, by the avoidance conditions, implies that the cells labeled $C_1$ and $C_4$ must be empty.

Now consider the cell in the top row labeled $\D$.  This cell cannot contain an entry to the left of an entry in the $\D_3$ cell because that would create a copy of $4123$, so all of the entries in this cell must lie to the right of all of the entries in $\D_3$.  However, if there are any such entries, then they would form an interval with the `$4$' of the identified copy of $3412$, so the $\D$ cell in the top row must be empty.  Similarly, it can be seen that the three other peripheral $\D$ cells are empty.  By simplicity it then follows that the cells labeled $\D_1$ and $\D_2$ must be empty and that $\D_3$ and $\D_4$ must be singletons.  This shows that $\sigma= 5274163$, as desired.
\end{proof}

\begin{corollary}\label{simpletypes}
If $\sigma$ is a simple permutation in $\Av(2341, 4123)$ then either
\begin{enumerate}
\item $\sigma$ contains $123$ but not $3412$,
\item $\sigma$ contains $3412$ but not $123$,
\item $\sigma$ contains both $123$ and $3412$ and is the permutation $5274163$,
\item $\sigma$ contains neither $123$ nor $3412$.  There are exactly two such permutations of this type of every even length $n\ge 4$.
\end{enumerate}
\end{corollary}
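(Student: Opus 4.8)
The plan is to observe first that statements (1)--(3) need essentially no argument: a simple permutation either contains $123$ or does not, and likewise for $3412$, so the four cases are exhaustive and (together with Proposition~\ref{propsimpletypes}, which pins down $5274163$ as the unique simple permutation containing both patterns) mutually exclusive. So the only real content is the last sentence of (4). I would reduce it as follows. A simple permutation of $\Av(2341,4123)$ avoiding both $123$ and $3412$ is precisely a simple permutation of $\Av(123,3412)$: one inclusion is immediate, and the reverse inclusion holds because both $2341$ and $4123$ contain $123$, so $\Av(123,3412)\subseteq\Av(2341,4123)$. Thus it suffices to prove that $\Av(123,3412)$ contains exactly two simple permutations of each even length $n\ge 4$ --- the two parallel alternations of Figure~\ref{parallelalternations} --- and none of odd length $\ge 3$.

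By Proposition~\ref{propAv1233412types}, every permutation of $\Av(123,3412)$ is a horizontal or a vertical juxtaposition of two decreasing permutations, and the inverse map interchanges these two shapes while preserving simplicity and fixing the class (note $123$ and $3412$ are both involutions). So I would analyze the horizontal case and then transfer the conclusion by inversion. Let $\pi=\pi_L\pi_R$ be a simple horizontal juxtaposition of length $n\ge 3$, with $\pi_L$ decreasing on the first $k$ positions and $\pi_R$ decreasing on the last $n-k$; simplicity forces $1\le k\le n-1$. The two key observations are: (i) the value $n$ must lie in $\pi_R$, for otherwise $\pi(1)=n$ and positions $2,\dots,n$ carry the values $1,\dots,n-1$, a proper interval --- and dually $1$ must lie in $\pi_L$; and (ii) by induction on $j$, the odd value $2j-1$ (when $\le n$) lies in $\pi_L$ and the even value $2j$ (when $\le n$) lies in $\pi_R$, since otherwise, say, $2j-1$ would be the $j$th smallest entry of $\pi_R$ by the inductive hypothesis and hence positionally adjacent to the entry $2j-2$, forming a proper interval of size two. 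Consequently $\pi_L$ consists of all the odd values in decreasing order and $\pi_R$ of all the even values in decreasing order; if $n$ were odd this would force $n\in\pi_L$, contradicting (i), so $n$ is even, $k=n/2$, and $\pi$ is the unique permutation $(n-1)(n-3)\cdots 1\,n(n-2)\cdots 2$.

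It then remains to check that this $\pi$ is genuinely simple and in the class: being a horizontal juxtaposition of two decreasing permutations it lies in $\Av(123,3412)$, and it is simple because any interval of positions contained in a single half would consist of at least two entries in an arithmetic progression with common difference $2$ (not consecutive integers), while any interval of positions straddling the two halves would contain both the value $1$ and the value $n$, hence be everything. Applying the inverse map gives exactly one simple vertical juxtaposition of each even length $n\ge 4$, the second parallel alternation, and the two are distinct because the first is not an involution; and since a simple permutation of $\Av(123,3412)$ of length $\ge 3$ has one of these two shapes, there are none of odd length. I expect the inductive step in (ii) to be the main obstacle: the delicate part is the bookkeeping that identifies which entry of a block a given value occupies, together with the handling of the boundary positions at the ends of the two blocks; everything else is routine casework with the forbidden patterns.
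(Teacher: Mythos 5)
Your proposal is correct and follows essentially the same route as the paper: cases (1)--(3) are dispatched by Proposition~\ref{propsimpletypes}, and case (4) is reduced via Proposition~\ref{propAv1233412types} to showing that a simple juxtaposition of two decreasing sequences must have them exactly interlace, forcing even length and one permutation of each of the two shapes. The only difference is one of detail: the paper asserts the interlacing claim in a single sentence, whereas you supply the inductive argument (odd values left, even values right) that justifies it.
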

\begin{proof}
The final alternative is the only one that does not follow directly from Proposition~\ref{propsimpletypes}.  However the form of permutations in $\Av(123, 3412)$ is given in Proposition~\ref{propAv1233412types}.  Such a permutation can be simple only if the two decreasing sequences shown in that proposition exactly interlace and the permutation neither  begins with its largest entry nor ends with its smallest entry.  Hence the permutation must have even length and there is one such for each lengfor each of the two forms in Proposition~\ref{propAv1233412types}.
\end{proof}

\section{The Structure of Simple Permutations in $\Av(2341, 4123, 3412)$}

We now work towards a description of the simple permutations in $\Av(2341, 4123, 3412)$.  The following result, which actually holds for all permutations in this class, is a stepping stone towards that goal.

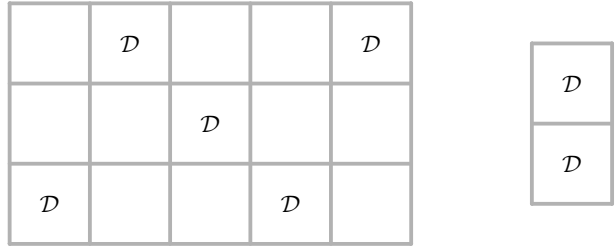
\begin{figure}
\begin{footnotesize}
\begin{center}
\begin{tabular}{ccc}
	\psset{xunit=0.007in, yunit=0.007in}
	\psset{linewidth=0.005in}
	\begin{pspicture}(0,0)(300,180)
	\multirput(0,0)(60,0){6}{\psline[linecolor=darkgray,linestyle=solid,linewidth=0.02in]{c-c}(0,0)(0,180)}
	\multirput(0,0)(0,60){4}{\psline[linecolor=darkgray,linestyle=solid,linewidth=0.02in]{c-c}(0,0)(300,0)}
	\rput[c](30,30){$\D$}
	\rput[c](90,150){$\D$}
	\rput[c](150,90){$\D$}
	\rput[c](210,30){$\D$}
	\rput[c](270,150){$\D$}
	\end{pspicture}
&\rule{0.3in}{0pt}&
	\psset{xunit=0.007in, yunit=0.007in}
	\psset{linewidth=0.005in}
	\begin{pspicture}(0,-30)(60,120)
	\multirput(0,0)(60,0){2}{\psline[linecolor=darkgray,linestyle=solid,linewidth=0.02in]{c-c}(0,0)(0,120)}
	\multirput(0,0)(0,60){3}{\psline[linecolor=darkgray,linestyle=solid,linewidth=0.02in]{c-c}(0,0)(60,0)}
	\rput[c](30,30){$\D$}
	\rput[c](30,90){$\D$}
	\end{pspicture}
\end{tabular}
\end{center}
\end{footnotesize}
\caption{The two types of permutation in Proposition~\ref{nbefore1}.  As usual, cells labeled by $\D$ represent decreasing sequences.}
\label{twotypesfigure}
\end{figure}

\begin{proposition}\label{nbefore1}
Every permutation $\pi\in\Av(2341, 4123, 3412)$ in which the greatest entry precedes the least entry has one of the two forms shown in Figure~\ref{twotypesfigure}.
\end{proposition}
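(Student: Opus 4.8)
The plan is to argue structurally, much as in the proof of Proposition~\ref{propsimpletypes}, by setting up a grid defined by the positions of the greatest and least entries and then using the three avoidance conditions to force the cells into the claimed shape. Write $\pi\in\Av(2341,4123,3412)$ and suppose the greatest entry $n$ occurs before the least entry $1$ in the one-line notation. This splits $\pi$ into four quadrants: entries before $n$, entries between $n$ and $1$ (in position), and entries after $1$, crossed with the value split above/below the remaining entries. The first thing I would establish is that $n$ being before $1$ already rules out a lot: since $n$ precedes everything between it and $1$, and $1$ follows everything there, a copy of $3412$ or $2341$ among those middle entries together with $n$ or $1$ is easy to produce, so the middle region is highly constrained.

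The key steps, in order: (1) Place $n$ and $1$ and record that every entry positioned before $n$ must be smaller than every entry positioned after $n$ but before $1$? — actually the cleaner first move is to note that any entry lying (in position) strictly between $n$ and $1$, if it lies above some later such entry, combines with $n$ at the front to give $3\,n\,\dots$ and with $1$ at the end to threaten $2341$/$3412$; so the ``interior'' entries form a decreasing sequence, and similarly one shows entries before $n$ form a decreasing sequence and entries after $1$ form a decreasing sequence. (2) Next, control the relative vertical position of these three decreasing blocks: using $4123$ applied to $n$ together with two entries before $n$ and one after, and $2341$ symmetrically, pin down which block sits high and which sits low. (3) Show that the entries before $n$ and those after $1$ interact with the interior block in only one of two ways, corresponding exactly to the two pictures in Figure~\ref{twotypesfigure}: either the configuration degenerates to the ``tall'' two-cell form (second picture), or it spreads into the five-cell zig-zag of decreasing runs (first picture). (4) Finally, check that each of the two resulting shapes indeed avoids $2341$, $4123$, and $3412$, so no further collapse is needed and both forms genuinely occur.

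I expect the main obstacle to be step (3): cleanly separating the two cases and showing there is no ``third'' configuration. Concretely, after the easy observations one has a handful of decreasing cells whose horizontal and vertical orderings are only partially determined, and one must show that whenever a cell is nonempty the positions of the other cells are forced — e.g. an entry to the left of the interior block and below it, combined with $n$, wants to create $4123$ unless something else is empty. Keeping track of which of the (up to) five cells can be simultaneously occupied, and verifying that the only consistent patterns are the two shown (rather than, say, a mirror image that would also need to be listed), is where the bookkeeping is genuinely delicate; the figure is doing a lot of work and I would lean on it heavily, spelling out each forced emptiness as a one-line appeal to one of the three forbidden patterns together with the ``$n$ before $1$'' hypothesis.
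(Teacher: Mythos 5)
Your step (1) contains a false claim that undermines the whole plan: the entries lying positionally between $n$ and $1$ need \emph{not} form a decreasing sequence. An ascent $a<b$ in that region, taken together with $n$ in front and $1$ behind, produces the pattern $4231$, which is not forbidden; it does not ``threaten $2341$/$3412$'' as you assert. The permutation $4231$ itself (or $35241$) lies in $\Av(2341,4123,3412)$, has its greatest entry before its least, and has an increasing interior --- and it is exactly a permutation of the second (``tall'') form of Figure~\ref{twotypesfigure}. So step (1) would erroneously eliminate the second form, which is precisely where the content of the proposition lies. (Your parallel claims for the entries before $n$ and after $1$ are fine: an ascent $ab$ before $n$ gives $2341$ via $a\,b\,n\,1$, and an ascent after $1$ gives $4123$ via $n\,1\,a\,b$.)

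To repair this you need the two ingredients the paper uses and your outline omits. First, the boundary cases where $n$ is the first entry or $1$ is the last entry are split off and handled by Proposition~\ref{propAv1233412types}: deleting $n$ (resp.\ $1$) leaves a permutation avoiding $123$ and $3412$, whose structure is already known. Second, in the remaining case, $3412$-avoidance forces $\pi(1)<\pi(n)$, and one grids $\pi$ by all four of the entries $\pi(1)$, $n$, $1$, $\pi(n)$ --- not just by $n$ and $1$. In that finer grid the interior column splits into cells $X$, $Y$, $Z$, and the avoidance conditions show only that $X\cup Y$ and $Y\cup Z$ are each decreasing; the dichotomy between the two forms is exactly the dichotomy between ``$X\cup Y\cup Z$ decreasing'' (first form) and ``$Y$ empty with $X$ and $Z$ interleaving into a vertical juxtaposition'' (second form). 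Your step (3) gestures at this case split, but built on the false step (1) it cannot be completed as written.
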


\begin{proof}
Consider an arbitrary permutation $\pi\in\Av(2341,4123, 3412)$ of length $n$ in which $n$ precedes $1$.  If either $n$ is the first entry or $1$ is the final entry, then the remainder of $\pi$ avoids $123$ and $3412$, and the result follows from Proposition~\ref{propAv1233412types}.  Suppose now that $n$ is not the first entry of $\pi$ and that $1$ is not the last entry of $\pi$.  Because $\pi$ avoids $3412$, this implies that $\pi(1)<\pi(n)$, giving the situation depicted on the left of Figure~\ref{fig1}.

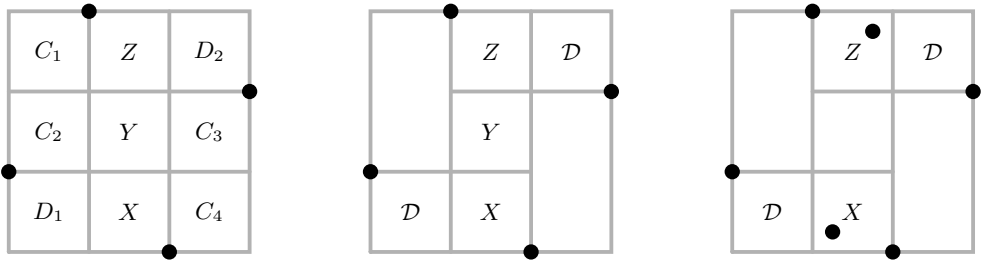
\begin{figure}
\begin{footnotesize}
\begin{center}
\begin{tabular}{ccccc}
	\psset{xunit=0.007in, yunit=0.007in}
	\psset{linewidth=0.005in}
	\begin{pspicture}(0,0)(180,180)
	\psline[linecolor=darkgray,linestyle=solid,linewidth=0.02in]{c-c}(0,0)(180,0)(180,180)(0,180)(0,0)
	\psline[linecolor=darkgray,linestyle=solid,linewidth=0.02in]{c-c}(0,60)(180,60)
	\psline[linecolor=darkgray,linestyle=solid,linewidth=0.02in]{c-c}(0,120)(180,120)
	\psline[linecolor=darkgray,linestyle=solid,linewidth=0.02in]{c-c}(60,0)(60,180)
	\psline[linecolor=darkgray,linestyle=solid,linewidth=0.02in]{c-c}(120,0)(120,180)
	\pscircle*(0,60){0.04in}
	\pscircle*(60,180){0.04in}
	\pscircle*(120,0){0.04in}
	\pscircle*(180,120){0.04in}
	\rput[c](30,30){$D_1$}
	\rput[c](90,30){$X$}
	\rput[c](90,90){$Y$}
	\rput[c](90,150){$Z$}
	\rput[c](150,150){$D_2$}
	\rput[c](30,90){$C_2$}
	\rput[c](30,150){$C_1$}
	\rput[c](150,90){$C_3$}
	\rput[c](150,30){$C_4$}
	\end{pspicture}
&\rule{0.3in}{0pt}&
	\psset{xunit=0.007in, yunit=0.007in}
	\psset{linewidth=0.005in}
	\begin{pspicture}(0,0)(180,180)
	\psline[linecolor=darkgray,linestyle=solid,linewidth=0.02in]{c-c}(0,0)(180,0)(180,180)(0,180)(0,0)
	\psline[linecolor=darkgray,linestyle=solid,linewidth=0.02in]{c-c}(0,60)(120,60)
	\psline[linecolor=darkgray,linestyle=solid,linewidth=0.02in]{c-c}(60,120)(180,120)
	\psline[linecolor=darkgray,linestyle=solid,linewidth=0.02in]{c-c}(60,0)(60,180)
	\psline[linecolor=darkgray,linestyle=solid,linewidth=0.02in]{c-c}(120,0)(120,180)
	\pscircle*(0,60){0.04in}
	\pscircle*(60,180){0.04in}
	\pscircle*(120,0){0.04in}
	\pscircle*(180,120){0.04in}
	\rput[c](30,30){$\D$}
	\rput[c](90,30){$X$}
	\rput[c](90,90){$Y$}
	\rput[c](90,150){$Z$}
	\rput[c](150,150){$\D$}
	\end{pspicture}
&\rule{0.3in}{0pt}&
	\psset{xunit=0.007in, yunit=0.007in}
	\psset{linewidth=0.005in}
	\begin{pspicture}(0,0)(180,180)
	\psline[linecolor=darkgray,linestyle=solid,linewidth=0.02in]{c-c}(0,0)(180,0)(180,180)(0,180)(0,0)
	\psline[linecolor=darkgray,linestyle=solid,linewidth=0.02in]{c-c}(0,60)(120,60)
	\psline[linecolor=darkgray,linestyle=solid,linewidth=0.02in]{c-c}(60,120)(180,120)
	\psline[linecolor=darkgray,linestyle=solid,linewidth=0.02in]{c-c}(60,0)(60,180)
	\psline[linecolor=darkgray,linestyle=solid,linewidth=0.02in]{c-c}(120,0)(120,180)
	\pscircle*(0,60){0.04in}
	\pscircle*(60,180){0.04in}
	\pscircle*(120,0){0.04in}
	\pscircle*(180,120){0.04in}
	\rput[c](30,30){$\D$}
	\rput[c](90,30){$X$}
	\rput[c](90,150){$Z$}
	\rput[c](150,150){$\D$}
	\pscircle*(75,15){0.04in}
	\pscircle*(105,165){0.04in}
	\end{pspicture}
\end{tabular}
\end{center}
\end{footnotesize}
\caption{Structure diagrams for a permutation in $\Av(2341,4123,3412)$ in which $n$ precedes $1$ but $n$ is not the first entry and $1$ is not the last entry.}
\label{fig1}
\end{figure}

From the fact that $\pi$ avoids $2341$, we see that the cells labeled $C_1$ and $C_2$ must be empty, while the cell $D_1$ must be decreasing.  Using the $4123$ avoidance of $\pi$, we see that the cells labeled $C_3$ and $C_4$ must be empty, while the cell $D_2$ must be decreasing.  This gives the center diagram of Figure~\ref{fig1}.

The 2341-avoidance proves that the region $Y\cup Z$ is decreasing, while the 4123-avoidance proves that $X\cup Y$ is decreasing.  If the entire region $X\cup Y\cup Z$ is decreasing then $\pi$ has the structure shown on the left of Figure~\ref{twotypesfigure}, and we are done.

So suppose to the contrary that $X\cup Y\cup Z$ is not decreasing.  Then cell $Y$ must be empty.  Furthermore, the first (and largest) point of cell $X$ must precede the last (and smallest) point of cell $Z$ and it follows from the $2341$, $4123$-avoidance again that the cell $X$ and the cell labeled $\D$ to its left must form a single decreasing sequence, as must the cell $Z$ and the cell labeled $\D$ to its right.  The permutation $\pi$ is therefore a vertical juxtaposition of decreasing sequences, which is the structure on the right of Figure~\ref{twotypesfigure}, completing the proof.
%
\end{proof}

\begin{corollary}\label{firstgtrlast}
Every permutation of length $n$ in $\Av(2341, 4123, 3412)$ whose first entry is greater than its last entry has one of the forms of Figure~\ref{twotypesfigureinverse}.
\end{corollary}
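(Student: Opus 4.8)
The plan is to deduce this corollary from Proposition~\ref{nbefore1} purely by applying the inverse symmetry, so no new geometry is needed. First I would record that the basis $\{2341,4123,3412\}$ is closed under taking inverses: a one-line computation gives $2341^{-1}=4123$, $4123^{-1}=2341$, and $3412^{-1}=3412$. Consequently $\pi\in\Av(2341,4123,3412)$ if and only if $\pi^{-1}\in\Av(2341,4123,3412)$.

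Next I would translate the hypothesis through the inverse. If $\pi$ has length $n$ and $\pi(1)>\pi(n)$, then in $\pi^{-1}$ the value $n$ occupies position $\pi(n)$ and the value $1$ occupies position $\pi(1)$; since $\pi(n)<\pi(1)$, the greatest entry of $\pi^{-1}$ precedes its least entry. Thus $\pi^{-1}$ satisfies the hypothesis of Proposition~\ref{nbefore1}, so $\pi^{-1}$ has one of the two forms displayed in Figure~\ref{twotypesfigure}.

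Finally I would pass back from $\pi^{-1}$ to $\pi=(\pi^{-1})^{-1}$. Inverting a permutation reflects its plot across the main diagonal $y=x$, carrying a partition of the plot into a grid of cells to the partition into the reflected grid of cells; since every decreasing permutation is its own inverse, each cell labeled $\D$ in Figure~\ref{twotypesfigure} stays a decreasing cell under this reflection. Reflecting the two pictures of Figure~\ref{twotypesfigure} across the diagonal — the horizontal five-cell zigzag becomes a vertical five-cell zigzag, and the vertical stack of two decreasing cells becomes a horizontal pair of decreasing cells — gives precisely the two shapes of Figure~\ref{twotypesfigureinverse}, which is the claim.

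I do not anticipate a genuine obstacle: the argument is a routine symmetry reduction. The only points needing care are the finite verification that the basis is inverse-closed and making sure the reflected structure diagrams coincide exactly with those drawn in Figure~\ref{twotypesfigureinverse}.
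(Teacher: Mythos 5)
Your proposal is correct and matches the paper's own proof, which likewise deduces the corollary from Proposition~\ref{nbefore1} by noting that $\Av(2341,4123,3412)$ is closed under inversion and that inversion reflects permutation diagrams about the diagonal. Your write-up simply makes explicit the details (the inverse-closure of the basis and the translation of the hypothesis) that the paper leaves implicit.
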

\begin{proof}
The permutations of the corollary are the inverses of the permutations of Proposition~\ref{nbefore1}.  The result follows because inversion is represented by reflection of permutation diagrams about the southwest-northeast diagonal and $\Av(2341, 4123, 3412)$ is closed under taking inverses.
\end{proof}

%
%
%
%

\begin{figure}
\begin{footnotesize}
\begin{center}
\begin{tabular}{ccc}
	\psset{xunit=0.007in, yunit=0.007in}
	\psset{linewidth=0.005in}
	\begin{pspicture}(0,0)(180,300)
	\multirput(0,0)(60,0){4}{\psline[linecolor=darkgray,linestyle=solid,linewidth=0.02in]{c-c}(0,0)(0,300)}
	\multirput(0,0)(0,60){6}{\psline[linecolor=darkgray,linestyle=solid,linewidth=0.02in]{c-c}(0,0)(180,0)}
	\rput[c](30,30){$\D$}
	\rput[c](30,210){$\D$}
	\rput[c](90,150){$\D$}
	\rput[c](150,90){$\D$}
	\rput[c](150,270){$\D$}
	\end{pspicture}
&\rule{0.3in}{0pt}&
	\psset{xunit=0.007in, yunit=0.007in}
	\psset{linewidth=0.005in}
	\begin{pspicture}(0,-120)(120,60)
	\multirput(0,0)(60,0){3}{\psline[linecolor=darkgray,linestyle=solid,linewidth=0.02in]{c-c}(0,0)(0,60)}
	\multirput(0,0)(0,60){2}{\psline[linecolor=darkgray,linestyle=solid,linewidth=0.02in]{c-c}(0,0)(120,0)}
	\rput[c](30,30){$\D$}
	\rput[c](90,30){$\D$}
	\end{pspicture}
\end{tabular}
\end{center}
\end{footnotesize}
\caption{The two types of permutation in Corollary~\ref{firstgtrlast}.}
\label{twotypesfigureinverse}
\end{figure}
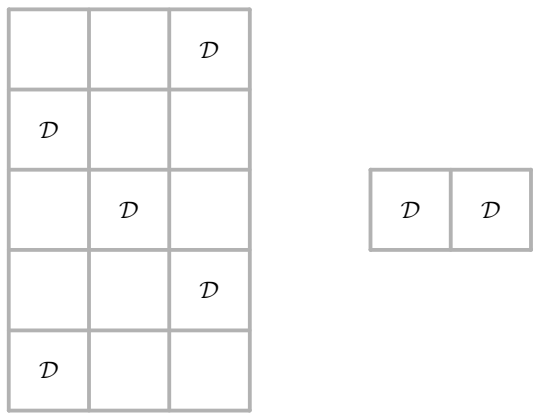

The previous two results have described very restricted subsets of $\Av(2341, 4123, 3412)$.  We now broaden our study to consider arbitrary permutations in this class.  Recall that the entry $\pi(j)$ of $\pi$ is a \emph{left-to-right maxima} (\emph{l-r max} for short) if $\pi(j)>\pi(i)$ for all $i<j$, and a \emph{right-to-left minima} (\emph{r-l min} for short) if $\pi(j)<\pi(k)$ for all $k>j$.  It is convenient to connect the l-r maxes and connect the r-l mins by axes-parallel paths as depicted in Figure~\ref{MinsMaxs}.  In this \emph{relative extrema diagram} the entries of the permutation are depicted by circles as usual while the squares denote \emph{inflections} in the axes-parallel paths.  From the definition of l-r maxes and r-l mins, it follows that there are no entries above the l-r max path and no entries below the r-l min path.

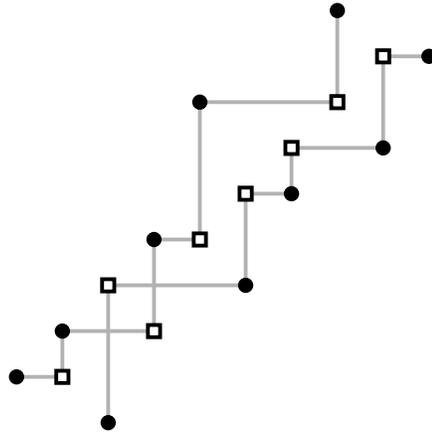
\begin{figure}
\begin{footnotesize}
\begin{center}
	\psset{xunit=0.012in, yunit=0.012in}
	\psset{linewidth=0.005in}
	\begin{pspicture}(0,0)(180,180)
	\psline[linecolor=darkgray,linestyle=solid,linewidth=0.02in]{c-c}(0,20)(20,20)(20,40)(60,40)(60,80)(80,80)(80,140)(140,140)(140,180)
	\pscircle*(0,20){0.04in}
	\rput[c](20,20){\psframe[linewidth=0.02in,fillstyle=solid,fillcolor=white](-3.5,-3.5)(3.5,3.5)}
	\pscircle*(20,40){0.04in}
	\rput[c](60,40){\psframe[linewidth=0.02in,fillstyle=solid,fillcolor=white](-3.5,-3.5)(3.5,3.5)}
	\pscircle*(60,80){0.04in}
	\rput[c](80,80){\psframe[linewidth=0.02in,fillstyle=solid,fillcolor=white](-3.5,-3.5)(3.5,3.5)}
	\pscircle*(80,140){0.04in}
	\rput[c](140,140){\psframe[linewidth=0.02in,fillstyle=solid,fillcolor=white](-3.5,-3.5)(3.5,3.5)}
	\pscircle*(140,180){0.04in}
	\psline[linecolor=darkgray,linestyle=solid,linewidth=0.02in]{c-c}(40,0)(40,60)(100,60)(100,100)(120,100)(120,120)(160,120)(160,160)(180,160)
	\pscircle*(40,0){0.04in}
	\rput[c](40,60){\psframe[linewidth=0.02in,fillstyle=solid,fillcolor=white](-3.5,-3.5)(3.5,3.5)}
	\pscircle*(100,60){0.04in}
	\rput[c](100,100){\psframe[linewidth=0.02in,fillstyle=solid,fillcolor=white](-3.5,-3.5)(3.5,3.5)}
	\pscircle*(120,100){0.04in}
	\rput[c](120,120){\psframe[linewidth=0.02in,fillstyle=solid,fillcolor=white](-3.5,-3.5)(3.5,3.5)}
	\pscircle*(160,120){0.04in}
	\rput[c](160,160){\psframe[linewidth=0.02in,fillstyle=solid,fillcolor=white](-3.5,-3.5)(3.5,3.5)}
	\pscircle*(180,160){0.04in}
	\end{pspicture}
\end{center}
\end{footnotesize}
\caption{L-r maxes and r-l mins in an arbitary permutation, connected by axes-parallel paths.}
\label{MinsMaxs}
\end{figure}

In our situation there are strong conditions on the interaction between the l-r maxes and the r-l mins.

\begin{lemma}\label{alternating}
If $\pi\in\Av(2341, 4123, 3412)$ is sum indecomposable then the inflection points form an increasing sequence in which the inflections associated with the l-r maxes alternate with the inflections associated with the r-l mins.
\end{lemma}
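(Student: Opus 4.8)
The plan is to translate everything into the two monotone boundary functions $M_i:=\max(\pi(1),\dots,\pi(i))$ and $m_i:=\min(\pi(i+1),\dots,\pi(n))$, defined for $1\le i\le n-1$. Write the l-r maxes of $\pi$ at positions $p_1=1<p_2<\dots<p_s$ with values $v_1<\dots<v_s=n$, and the r-l mins at positions $q_1<\dots<q_t=n$ with values $w_1=1<\dots<w_t=\pi(n)$; inspecting the conventions of the relative extrema diagram, the l-r max inflections are exactly the points $I^L_k:=(p_{k+1},v_k)$ $(1\le k\le s-1)$ and the r-l min inflections are the points $I^R_j:=(q_j,w_{j+1})$ $(1\le j\le t-1)$. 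The only consequence of sum indecomposability I will use is the standard equivalence with $M_i>m_i$ for all $i\in\{1,\dots,n-1\}$; in particular no entry is both an l-r max and an r-l min, and $\pi(1)\ne1$, $\pi(n)\ne n$ (so the value $1$ lies to the right of position $1$ and the value $n$ to the left of position $n$). I will also use freely the elementary identities $M_i=v_k$ for $p_k\le i<p_{k+1}$, $m_{q_j}=w_{j+1}$, and the fact that any entry lying strictly between two consecutive r-l mins exceeds the later one's value (and the dual statement for l-r maxes).

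The crux is the claim that for every $k$ with $1\le k\le s-2$ there is an r-l min at a position strictly between $p_{k+1}$ and $p_{k+2}$; such an r-l min is automatically not the last entry (its position is $<p_{k+2}\le p_s<n$), so it contributes an r-l min inflection lying, in the $x$-ordering, strictly between $I^L_k$ and $I^L_{k+1}$. To prove it, assume there is no such r-l min and set $c:=m_{p_{k+1}}$, so $c<M_{p_{k+1}}=v_{k+1}$. If $c<v_k$, the position $i^\ast$ realizing the minimum $c$ is an r-l min, hence by hypothesis lies beyond $p_{k+2}$, and then $v_k,v_{k+1},v_{k+2},c$ read at $p_k<p_{k+1}<p_{k+2}<i^\ast$ is a copy of $2341$. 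If instead $c>v_k$, then every entry at a position $\ge p_{k+1}$ has value $>v_k$, so all values $\le v_k$ — of which there are $v_k$ — occupy positions among $1,\dots,p_{k+1}-1$; since within that range only position $p_k$ carries a value $\ge v_k$, a short count forces $v_k=p_{k+1}-1$ and hence $\{\pi(1),\dots,\pi(p_{k+1}-1)\}=\{1,\dots,p_{k+1}-1\}$, contradicting sum indecomposability. This proves the claim. The dual — between two consecutive r-l min inflections there is an l-r max inflection — follows by applying the claim to the reverse-complement of $\pi$ (the composite of reversal and complementation), which again lies in $\Av(2341,4123,3412)$, is again sum indecomposable, and interchanges l-r maxes with r-l mins. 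Hence no two inflections of the same kind are consecutive in the $x$-ordering, i.e. the kinds alternate.

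Finally I must check the $y$-coordinates increase along the $x$-ordering. Within one kind this is automatic. For a transition in which $I^R_j$ is immediately followed by $I^L_k$, alternation forces $q_j\in(p_k,p_{k+1})$ (with $p_1=1$ when $k=1$), so $v_k=M_{q_j}>m_{q_j}=w_{j+1}$, which is the desired inequality. For a transition in which $I^L_a$ is immediately followed by an $I^R_b$ that is not the last inflection, alternation produces an l-r max inflection $I^L_{a+1}$ just after it, so l-r max number $a+2$ exists and its position precedes $q_{b+1}$; were $w_{b+1}<v_a$, the entries $v_a,v_{a+1},v_{a+2},w_{b+1}$ (at $p_a<p_{a+1}<p_{a+2}<q_{b+1}$) would form a copy of $2341$, so $v_a<w_{b+1}$. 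The one remaining transition is from the last l-r max inflection to the last r-l min inflection, $I^L_{s-1}$ followed by $I^R_{t-1}$ with $p_s<q_{t-1}$; here if $\pi(n)<v_{s-1}$ then $v_{s-1},\,n,\,w_{t-1},\,\pi(n)$ read at $p_{s-1}<p_s<q_{t-1}<n$ is a copy of $3412$, so again the $y$-coordinate increases. The main obstacle is the alternation claim, and within it the case $c>v_k$: there is no forbidden pattern to exploit, so one must instead manufacture a sum-decomposition of $\pi$, and keeping that counting argument correct (together with the analogous care that the final "last-to-last" transition needs $3412$ rather than $2341$) is the delicate part.
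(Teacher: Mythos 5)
Your proof is correct. It uses the same basic ingredients as the paper's --- the patterns $2341$ and $3412$, the reverse-complement symmetry to import $4123$, and sum indecomposability --- but organizes them quite differently. The paper works locally: it lists four illegal configurations of inflection pairs (Figure~\ref{FourIllegals}), kills the first with $3412$, the second with a sum decomposition, and the third and fourth (the alternation step) with $2341$ or $4123$, remarking only that these are ``essentially'' the only ways alternation can fail. You instead prove a global interleaving statement about the extrema themselves --- between consecutive l-r maxes there is an r-l min, and dually --- via the prefix-maximum and suffix-minimum functions $M_i$, $m_i$, and then check monotonicity of the inflection sequence transition by transition. The chief thing your route buys is rigor exactly where the paper is vaguest: your case $c>v_k$, in which no forbidden pattern is available and you manufacture a sum decomposition by counting, is precisely the justification hiding behind the paper's ``essentially''; your separate $3412$ treatment of the final $I^L\to I^R$ transition likewise pins down a boundary case that the local pictures obscure. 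What the paper's route buys is brevity and a picture-level argument that sits naturally beside the tile analysis in the rest of the section. Two small remarks: you silently pass over $c=v_k$, which is harmless since $v_k$ occurs at position $p_k<p_{k+1}$ and so cannot equal a minimum taken over positions beyond $p_{k+1}$; and in your middle transition case the four points $v_a,v_{a+1},w_b,w_{b+1}$ at positions $p_a<p_{a+1}<q_b<q_{b+1}$ already form a $3412$ (exactly the paper's first illegal configuration), so the detour through a third l-r max and the pattern $2341$ --- and hence the case split on whether $I^R_b$ is the last inflection --- is avoidable.
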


\begin{proof}
The inflections associated with the l-r maxes are all increasing by definition, as are the inflections associated with the r-l mins.  To show that their union is increasing we just have to show that neither of the two situations on the left of Figure~\ref{FourIllegals} can arise.

\begin{figure}
\begin{footnotesize}
\begin{center}
\begin{tabular}{ccccccc}
	\psset{xunit=0.012in, yunit=0.012in}
	\psset{linewidth=0.005in}
	\begin{pspicture}(0,-20)(60,60)
	\psline[linecolor=darkgray,linestyle=solid,linewidth=0.02in]{c-c}(0,40)(20,40)(20,60)
	\pscircle*(0,40){0.04in}
	\rput[c](20,40){\psframe[linewidth=0.02in,fillstyle=solid,fillcolor=white](-3.5,-3.5)(3.5,3.5)}
	\pscircle*(20,60){0.04in}
	\psline[linecolor=darkgray,linestyle=solid,linewidth=0.02in]{c-c}(40,0)(40,20)(60,20)
	\pscircle*(40,0){0.04in}
	\rput[c](40,20){\psframe[linewidth=0.02in,fillstyle=solid,fillcolor=white](-3.5,-3.5)(3.5,3.5)}
	\pscircle*(60,20){0.04in}
	\end{pspicture}
&\rule{0.3in}{0pt}&
	\psset{xunit=0.012in, yunit=0.012in}
	\psset{linewidth=0.005in}
	\begin{pspicture}(0,-20)(60,60)
	\psline[linecolor=darkgray,linestyle=solid,linewidth=0.02in]{c-c}(0,20)(40,20)(40,60)
	\pscircle*(0,20){0.04in}
	\rput[c](40,20){\psframe[linewidth=0.02in,fillstyle=solid,fillcolor=white](-3.5,-3.5)(3.5,3.5)}
	\pscircle*(40,60){0.04in}
	\psline[linecolor=darkgray,linestyle=solid,linewidth=0.02in]{c-c}(20,0)(20,40)(60,40)
	\pscircle*(20,0){0.04in}
	\rput[c](20,40){\psframe[linewidth=0.02in,fillstyle=solid,fillcolor=white](-3.5,-3.5)(3.5,3.5)}
	\pscircle*(60,40){0.04in}
	\end{pspicture}
&\rule{0.3in}{0pt}&
	\psset{xunit=0.012in, yunit=0.012in}
	\psset{linewidth=0.005in}
	\begin{pspicture}(0,0)(100,100)
	\psline[linecolor=darkgray,linestyle=solid,linewidth=0.02in]{c-c}(0,40)(40,40)(40,60)(60,60)(60,100)
	\pscircle*(0,40){0.04in}
	\rput[c](40,40){\psframe[linewidth=0.02in,fillstyle=solid,fillcolor=white](-3.5,-3.5)(3.5,3.5)}
	\pscircle*(40,60){0.04in}
	\rput[c](60,60){\psframe[linewidth=0.02in,fillstyle=solid,fillcolor=white](-3.5,-3.5)(3.5,3.5)}
	\pscircle*(60,100){0.04in}
	\psline[linecolor=darkgray,linestyle=solid,linewidth=0.02in]{c-c}(20,0)(20,20)(80,20)(80,80)(100,80)
	\pscircle*(20,0){0.04in}
	\rput[c](20,20){\psframe[linewidth=0.02in,fillstyle=solid,fillcolor=white](-3.5,-3.5)(3.5,3.5)}
	\pscircle*(80,20){0.04in}
	\rput[c](80,80){\psframe[linewidth=0.02in,fillstyle=solid,fillcolor=white](-3.5,-3.5)(3.5,3.5)}
	\pscircle*(100,80){0.04in}
	\end{pspicture}
&\rule{0.3in}{0pt}&
	\psset{xunit=0.012in, yunit=0.012in}
	\psset{linewidth=0.005in}
	\begin{pspicture}(0,0)(100,100)
	\psline[linecolor=darkgray,linestyle=solid,linewidth=0.02in]{c-c}(0,20)(20,20)(20,80)(80,80)(80,100)
	\pscircle*(0,20){0.04in}
	\rput[c](20,20){\psframe[linewidth=0.02in,fillstyle=solid,fillcolor=white](-3.5,-3.5)(3.5,3.5)}
	\pscircle*(20,80){0.04in}
	\rput[c](80,80){\psframe[linewidth=0.02in,fillstyle=solid,fillcolor=white](-3.5,-3.5)(3.5,3.5)}
	\pscircle*(80,100){0.04in}
	\psline[linecolor=darkgray,linestyle=solid,linewidth=0.02in]{c-c}(40,0)(40,40)(60,40)(60,60)(100,60)
	\pscircle*(40,0){0.04in}
	\rput[c](40,40){\psframe[linewidth=0.02in,fillstyle=solid,fillcolor=white](-3.5,-3.5)(3.5,3.5)}
	\pscircle*(60,40){0.04in}
	\rput[c](60,60){\psframe[linewidth=0.02in,fillstyle=solid,fillcolor=white](-3.5,-3.5)(3.5,3.5)}
	\pscircle*(100,60){0.04in}
	\end{pspicture}
\end{tabular}
\end{center}
\end{footnotesize}
\caption{Four illegal configurations for a permutation in $\Av(2341, 4123, 3412)$.}
\label{FourIllegals}
\end{figure}
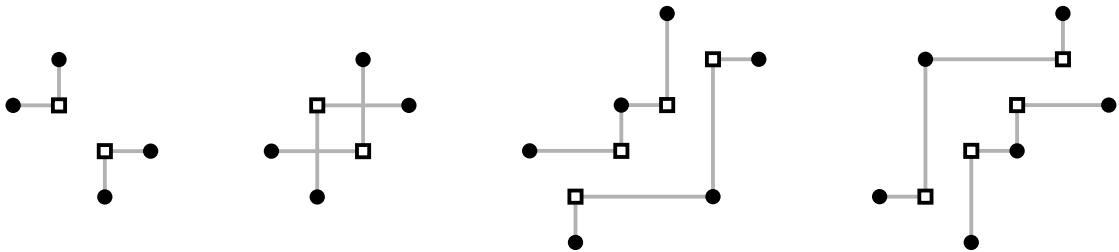

The first situation in Figure~\ref{FourIllegals} cannot arise, because it contains $3412$.  Now suppose that the second situation of Figure~\ref{FourIllegals} occurs in one of these permutations.  The inflection in the lower-right position comes from the l-r max path, so $\pi$ cannot contain any entries above this path.  Similarly, $\pi$ cannot contain any entries below the r-l min path.  This implies that $\pi$ must be sum decomposable, which is a contradiction.

It remains to show that the two types of inflection alternate.  Essentially, the only way this property can fail is if the permutation contains the third or fourth situation from Figure~\ref{FourIllegals}.  Both of these situations contain a copy of either $2341$ or $4123$, completing the proof.
\end{proof}

This lemma holds, of course, for all simple permutations in $\Av(2341,4123,3412)$ (of length more than 2) and we now build on it to pin down the structure of such permutations.
Figure~\ref{SimpleExample} shows one of the two ways in which the l-r maxes can interact with the r-l mins in such a simple permutation.  In this figure the leftmost inflection is associated with the l-r maxes; the other way is where the leftmost inflection is associated with the r-l mins and the two types are related by inversion.

Figure~\ref{SimpleExample} shows the permutation partitioned into cells: these cells are called \emph{corner} cells if they abut a l-r max or a r-l min, and \emph{central} cells otherwise.  Successive corner cells, except for the first two and final two, are always separated by a central cell.  As we shall soon see there are strong dependencies between consecutive cells. 

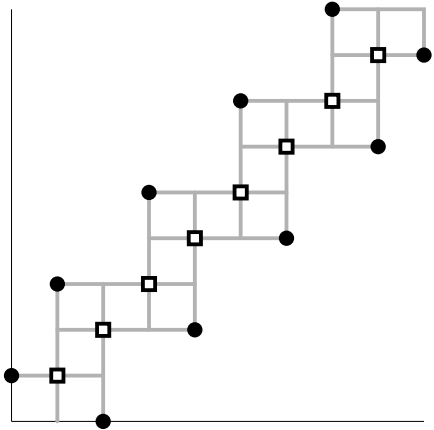
\begin{figure}
\begin{footnotesize}
\begin{center}
	\psset{xunit=0.012in, yunit=0.012in}
	\psset{linewidth=0.005in}
	\begin{pspicture}(0,0)(180,180)
	\psaxes[dy=200,dx=200,tickstyle=bottom,showorigin=false,labels=none](0,0)(180,180)
	\psline[linecolor=darkgray,linestyle=solid,linewidth=0.02in]{c-c}(0,20)(20,20)(20,60)(60,60)(60,100)(100,100)(100,140)(140,140)(140,180)
	\psline[linecolor=darkgray,linestyle=solid,linewidth=0.02in]{c-c}(20,0)(20,20)(40,20)
	\psline[linecolor=darkgray,linestyle=solid,linewidth=0.02in]{c-c}(20,40)(40,40)(40,60)
	\psline[linecolor=darkgray,linestyle=solid,linewidth=0.02in]{c-c}(60,40)(60,60)(80,60)
	\psline[linecolor=darkgray,linestyle=solid,linewidth=0.02in]{c-c}(60,80)(80,80)(80,100)
	\psline[linecolor=darkgray,linestyle=solid,linewidth=0.02in]{c-c}(100,80)(100,100)(120,100)
	\psline[linecolor=darkgray,linestyle=solid,linewidth=0.02in]{c-c}(100,120)(120,120)(120,140)
	\psline[linecolor=darkgray,linestyle=solid,linewidth=0.02in]{c-c}(140,120)(140,140)(160,140)
	\psline[linecolor=darkgray,linestyle=solid,linewidth=0.02in]{c-c}(140,160)(160,160)(160,180)
	\psline[linecolor=darkgray,linestyle=solid,linewidth=0.02in]{c-c}(140,180)(180,180)(180,160)
	\pscircle*(0,20){0.04in}
	\rput[c](20,20){\psframe[linewidth=0.02in,fillstyle=solid,fillcolor=white](-3.5,-3.5)(3.5,3.5)}
	\pscircle*(20,60){0.04in}
	\rput[c](60,60){\psframe[linewidth=0.02in,fillstyle=solid,fillcolor=white](-3.5,-3.5)(3.5,3.5)}
	\pscircle*(60,100){0.04in}
	\rput[c](100,100){\psframe[linewidth=0.02in,fillstyle=solid,fillcolor=white](-3.5,-3.5)(3.5,3.5)}
	\pscircle*(100,140){0.04in}
	\rput[c](140,140){\psframe[linewidth=0.02in,fillstyle=solid,fillcolor=white](-3.5,-3.5)(3.5,3.5)}
	\pscircle*(140,180){0.04in}
	\psline[linecolor=darkgray,linestyle=solid,linewidth=0.02in]{c-c}(40,0)(40,40)(80,40)(80,80)(120,80)(120,120)(160,120)(160,160)(180,160)
	\pscircle*(40,0){0.04in}
	\rput[c](40,40){\psframe[linewidth=0.02in,fillstyle=solid,fillcolor=white](-3.5,-3.5)(3.5,3.5)}
	\pscircle*(80,40){0.04in}
	\rput[c](80,80){\psframe[linewidth=0.02in,fillstyle=solid,fillcolor=white](-3.5,-3.5)(3.5,3.5)}
	\pscircle*(120,80){0.04in}
	\rput[c](120,120){\psframe[linewidth=0.02in,fillstyle=solid,fillcolor=white](-3.5,-3.5)(3.5,3.5)}
	\pscircle*(160,120){0.04in}
	\rput[c](160,160){\psframe[linewidth=0.02in,fillstyle=solid,fillcolor=white](-3.5,-3.5)(3.5,3.5)}
	\pscircle*(180,160){0.04in}
	\end{pspicture}
\end{center}
\end{footnotesize}
\caption{One of two possible interactions of l-r maxes and r-l mins in a permutation whose inflection points satisfy the conditions of Lemma \ref{alternating}.}
\label{SimpleExample}
\end{figure}

In such a simple permutation, consider any four points consisting of two consecutive l-r maxes and two consecutive r-l mins whose associated inflection points are also consecutive, together with the set of points of the permutation contained within the rectangle they define.  There are two possible ways for the two l-r maxes to interleave with the two r-l mins (as a $2413$ or as a $3142$), and in each case the subpermutation in the rectangle they define is of one of the types considered in Proposition~\ref{nbefore1} or Corollary~\ref{firstgtrlast}.  Therefore the possible forms for the subpermutation are as shown in Figure~\ref{4pointsandrectcontent}.  Following our conventions, the empty regions in these diagrams are empty.  Also, because of simplicity, a central cell labeled $\D$ must be either empty or a singleton.

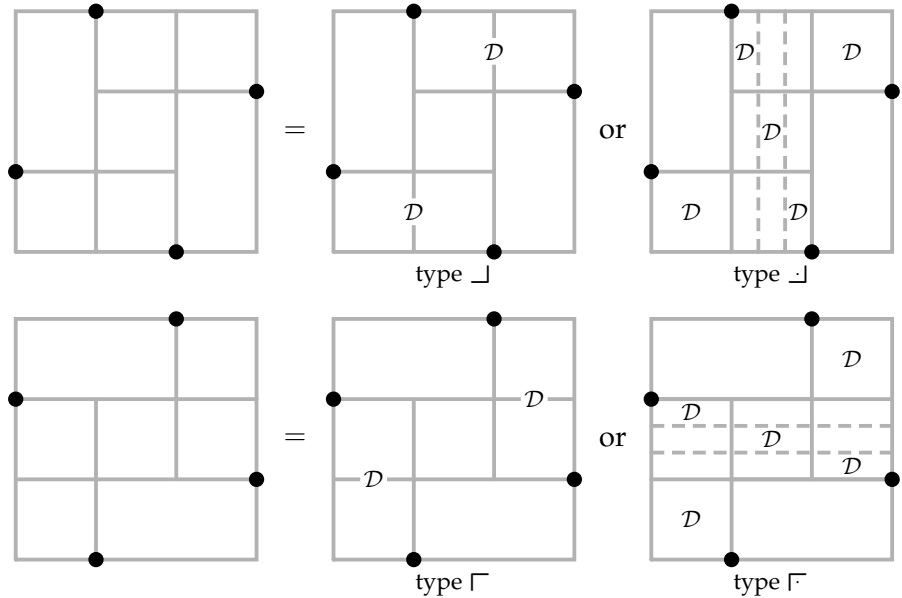
\begin{figure}
\begin{footnotesize}
\begin{center}
\begin{tabular}{ccccc}
	\psset{xunit=0.007in, yunit=0.007in}
	\psset{linewidth=0.005in}
	\begin{pspicture}(0,0)(180,180)
	\psline[linecolor=darkgray,linestyle=solid,linewidth=0.02in]{c-c}(0,0)(180,0)(180,180)(0,180)(0,0)
	\psline[linecolor=darkgray,linestyle=solid,linewidth=0.02in]{c-c}(0,60)(60,60)(60,180)
	\psline[linecolor=darkgray,linestyle=solid,linewidth=0.02in]{c-c}(120,0)(120,120)(180,120)
	\psline[linecolor=darkgray,linestyle=solid,linewidth=0.02in]{c-c}(60,0)(60,60)(120,60)
	\psline[linecolor=darkgray,linestyle=solid,linewidth=0.02in]{c-c}(60,120)(120,120)(120,180)
	\pscircle*(0,60){0.04in}
	\pscircle*(60,180){0.04in}
	\pscircle*(120,0){0.04in}
	\pscircle*(180,120){0.04in}
	\end{pspicture}
&
\psset{xunit=0.007in, yunit=0.007in}
\psset{linewidth=0.005in}
\begin{pspicture}(0,0)(10,180)
\rput[c](5,90){\begin{normalsize}$=$\end{normalsize}}
\end{pspicture}
&
	\psset{xunit=0.007in, yunit=0.007in}
	\psset{linewidth=0.005in}
	\begin{pspicture}(0,0)(180,180)
	\psline[linecolor=darkgray,linestyle=solid,linewidth=0.02in]{c-c}(0,0)(180,0)(180,180)(0,180)(0,0)
	\psline[linecolor=darkgray,linestyle=solid,linewidth=0.02in]{c-c}(0,60)(60,60)(60,180)
	\psline[linecolor=darkgray,linestyle=solid,linewidth=0.02in]{c-c}(120,0)(120,120)(180,120)
	\psline[linecolor=darkgray,linestyle=solid,linewidth=0.02in]{c-c}(60,60)(120,60)
	\psline[linecolor=darkgray,linestyle=solid,linewidth=0.02in]{c-c}(60,120)(120,120)
	\psline[linecolor=darkgray,linestyle=solid,linewidth=0.02in](60,0)(60,20)
	\psline[linecolor=darkgray,linestyle=solid,linewidth=0.02in](60,40)(60,60)
	\rput[c](60,30){$\D$}
	\psline[linecolor=darkgray,linestyle=solid,linewidth=0.02in](120,120)(120,140)
	\psline[linecolor=darkgray,linestyle=solid,linewidth=0.02in](120,160)(120,180)
	\rput[c](120,150){$\D$}
	\pscircle*(0,60){0.04in}
	\pscircle*(60,180){0.04in}
	\pscircle*(120,0){0.04in}
	\pscircle*(180,120){0.04in}
	\end{pspicture}
&
\psset{xunit=0.007in, yunit=0.007in}
\psset{linewidth=0.005in}
\begin{pspicture}(0,0)(10,180)
\rput[c](5,90){\begin{normalsize}or\end{normalsize}}
\end{pspicture}
&
	\psset{xunit=0.007in, yunit=0.007in}
	\psset{linewidth=0.005in}
	\begin{pspicture}(0,0)(180,180)
	\psline[linecolor=darkgray,linestyle=solid,linewidth=0.02in]{c-c}(0,0)(180,0)(180,180)(0,180)(0,0)
	\psline[linecolor=darkgray,linestyle=solid,linewidth=0.02in]{c-c}(0,60)(60,60)(60,180)
	\psline[linecolor=darkgray,linestyle=solid,linewidth=0.02in]{c-c}(120,0)(120,120)(180,120)
	\psline[linecolor=darkgray,linestyle=solid,linewidth=0.02in]{c-c}(60,60)(120,60)
	\psline[linecolor=darkgray,linestyle=solid,linewidth=0.02in]{c-c}(60,120)(120,120)
	\psline[linecolor=darkgray,linestyle=solid,linewidth=0.02in]{c-c}(60,0)(60,180)
	\psline[linecolor=darkgray,linestyle=dashed,linewidth=0.02in]{c-c}(80,0)(80,180)
	\psline[linecolor=darkgray,linestyle=dashed,linewidth=0.02in]{c-c}(100,0)(100,180)
	\psline[linecolor=darkgray,linestyle=solid,linewidth=0.02in]{c-c}(120,0)(120,180)
	\rput[c](30,30){$\D$}
	\rput[c](150,150){$\D$}
	\rput[c](70,150){$\D$}
	\rput[c](90,90){$\D$}
	\rput[c](110,30){$\D$}
	\pscircle*(0,60){0.04in}
	\pscircle*(60,180){0.04in}
	\pscircle*(120,0){0.04in}
	\pscircle*(180,120){0.04in}
	\end{pspicture}
\\
&&type \tilenla&&type \tilenlb
\\\\
	\psset{xunit=0.007in, yunit=0.007in}
	\psset{linewidth=0.005in}
	\begin{pspicture}(0,0)(180,180)
	\psline[linecolor=darkgray,linestyle=solid,linewidth=0.02in]{c-c}(0,0)(180,0)(180,180)(0,180)(0,0)
	\psline[linecolor=darkgray,linestyle=solid,linewidth=0.02in]{c-c}(0,120)(120,120)(120,180)
	\psline[linecolor=darkgray,linestyle=solid,linewidth=0.02in]{c-c}(60,0)(60,60)(180,60)
	\psline[linecolor=darkgray,linestyle=solid,linewidth=0.02in]{c-c}(0,60)(60,60)(60,120)
	\psline[linecolor=darkgray,linestyle=solid,linewidth=0.02in]{c-c}(120,60)(120,120)(180,120)
	\pscircle*(0,120){0.04in}
	\pscircle*(60,0){0.04in}
	\pscircle*(120,180){0.04in}
	\pscircle*(180,60){0.04in}
	\end{pspicture}
&
\psset{xunit=0.007in, yunit=0.007in}
\psset{linewidth=0.005in}
\begin{pspicture}(0,0)(10,180)
\rput[c](5,90){\begin{normalsize}$=$\end{normalsize}}
\end{pspicture}
&
	\psset{xunit=0.007in, yunit=0.007in}
	\psset{linewidth=0.005in}
	\begin{pspicture}(0,0)(180,180)
	\psline[linecolor=darkgray,linestyle=solid,linewidth=0.02in]{c-c}(0,0)(180,0)(180,180)(0,180)(0,0)
	\psline[linecolor=darkgray,linestyle=solid,linewidth=0.02in]{c-c}(0,120)(120,120)(120,180)
	\psline[linecolor=darkgray,linestyle=solid,linewidth=0.02in]{c-c}(60,0)(60,60)(180,60)
	\pscircle*(0,120){0.04in}
	\pscircle*(60,0){0.04in}
	\pscircle*(120,180){0.04in}
	\pscircle*(180,60){0.04in}
	\psline[linecolor=darkgray,linestyle=solid,linewidth=0.02in]{c-c}(60,60)(60,120)
	\psline[linecolor=darkgray,linestyle=solid,linewidth=0.02in]{c-c}(120,60)(120,120)
	\psline[linecolor=darkgray,linestyle=solid,linewidth=0.02in](0,60)(20,60)
	\psline[linecolor=darkgray,linestyle=solid,linewidth=0.02in](40,60)(60,60)
	\rput[c](30,60){$\D$}
	\psline[linecolor=darkgray,linestyle=solid,linewidth=0.02in](120,120)(140,120)
	\psline[linecolor=darkgray,linestyle=solid,linewidth=0.02in](160,120)(180,120)
	\rput[c](150,120){$\D$}
	\end{pspicture}
&
\psset{xunit=0.007in, yunit=0.007in}
\psset{linewidth=0.005in}
\begin{pspicture}(0,0)(10,180)
\rput[c](5,90){\begin{normalsize}or\end{normalsize}}
\end{pspicture}
&
\psset{xunit=0.007in, yunit=0.007in}
\psset{linewidth=0.005in}
\begin{pspicture}(0,0)(180,180)
\psline[linecolor=darkgray,linestyle=solid,linewidth=0.02in]{c-c}(0,0)(180,0)(180,180)(0,180)(0,0)
	\psline[linecolor=darkgray,linestyle=solid,linewidth=0.02in]{c-c}(0,120)(120,120)(120,180)
	\psline[linecolor=darkgray,linestyle=solid,linewidth=0.02in]{c-c}(60,0)(60,60)(180,60)
\psline[linecolor=darkgray,linestyle=solid,linewidth=0.02in]{c-c}(60,60)(60,120)
\psline[linecolor=darkgray,linestyle=solid,linewidth=0.02in]{c-c}(120,60)(120,120)
\psline[linecolor=darkgray,linestyle=solid,linewidth=0.02in]{c-c}(0,60)(180,60)
\psline[linecolor=darkgray,linestyle=dashed,linewidth=0.02in]{c-c}(0,80)(180,80)
\psline[linecolor=darkgray,linestyle=dashed,linewidth=0.02in]{c-c}(0,100)(180,100)
\psline[linecolor=darkgray,linestyle=solid,linewidth=0.02in]{c-c}(0,120)(180,120)
\rput[c](30,30){$\D$}
\rput[c](150,150){$\D$}
\rput[c](30,110){$\D$}
\rput[c](90,90){$\D$}
\rput[c](150,70){$\D$}
	\pscircle*(0,120){0.04in}
	\pscircle*(60,0){0.04in}
	\pscircle*(120,180){0.04in}
	\pscircle*(180,60){0.04in}
\end{pspicture}
\\
&&type \tilelna&&type \tilelnb
\\
\end{tabular}
\end{center}
\end{footnotesize}
\caption{The possible forms of the rectangles defined by two consecutive l-r maxes and two consecutive l-r mins in a simple permutation in $\Av(2341, 4123, 3412)$.}
\label{4pointsandrectcontent}
\end{figure}

We call the rectangles of Figure~\ref{4pointsandrectcontent} formed from the l-r maxes and r-l mins of a simple permutation in $\Av(2341,4123,3412)$ the ``tiles'' of the permutation, and we say that each tile is of type $2413$ or $3142$ (the type being determined by the relative order of the four extremal entries).  The whole permutation is then a union of overlapping, alternating tiles (overlapping in strips and alternating in type).  Since we know the structure of the tiles and that they must fit together with compatible intersections we can deduce strong consequences for the cells of a permutation.

To give suggestive names to the tiles in Figure~\ref{4pointsandrectcontent} call the diagrams in the first row $\tilenla$ and $\tilenlb$ left-to-right.  Similarly call the second row diagrams $\tilelna$ and $\tilelnb$.  

\begin{theorem}\label{sAv(2341-4123-3412}
A simple permutation $\pi$ of length more than 2 lies in $\Av(2341,4123,3412)$ if and only if it satisfies the following four conditions.
\begin{enumerate}
\item[(a)] The inflection points form an increasing sequence in which the inflections associated with the l-r maxes alternate with the inflections associated with r-l mins.
\item[(b)] The corner cells of $\pi$ are decreasing.
\item[(c)] Every pair of consecutive corner cells either form a decreasing sequence or interlace as a parallel alternation in the sense of Figure~\ref{parallelalternations}
\item[(d)] A non-empty corner cell interlaces with either the previous or next corner cell, but not both.
\item[(e)]  A central cell $s$ contains at most one element.   If the two corners adjacent to it interlace then $s$ is empty; otherwise these two cells together with $s$ form a decreasing sequence.
\end{enumerate}
\end{theorem}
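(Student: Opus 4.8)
The proof will establish the two implications of the biconditional separately, with the bulk of the work in the ``only if'' direction, which is largely a matter of organizing the structural facts already assembled.

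\emph{Necessity of \textup{(a)--(e)}.} Suppose $\pi$ is a simple permutation of length more than $2$ lying in $\Av(2341,4123,3412)$. Such a $\pi$ is sum indecomposable, so Lemma~\ref{alternating} yields (a) immediately, and with it the decomposition of $\pi$ into corner and central cells along the l-r max and r-l min paths. Now fix two consecutive l-r maxes and two consecutive r-l mins whose associated inflection points are consecutive; these four extremal entries interleave as a $2413$ or a $3142$, and the sub\-per\-mu\-ta\-tion of $\pi$ inside the rectangle they bound then satisfies the hypotheses of Proposition~\ref{nbefore1} or of Corollary~\ref{firstgtrlast}, so it takes one of the four ``tile'' shapes of Figure~\ref{4pointsandrectcontent}. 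Condition (b) is read off at once, since in every tile shape each corner cell is drawn as a decreasing cell; the size bound in (e) follows because a central cell with two or more entries would be a nontrivial interval of $\pi$. For (c) and (d) one considers a pair of consecutive corner cells: they lie in a common tile — or, for the very first or very last pair, at the appropriate end of the first or last tile — and the list of tile shapes shows that consecutive corner cells either form a single decreasing run (in particular whenever one of them is empty) or interlace as one of the parallel alternations of Figure~\ref{parallelalternations}. That consecutive tiles overlap only in a horizontal or vertical strip, and that the two tiles' descriptions of the cells in that strip must agree, forces a nonempty corner cell to interlace on at most one of its two sides, which is (d); the same overlap compatibility, applied to the central cell lying in such a strip, gives the remaining dichotomy in (e): that central cell is empty exactly when its two flanking corner cells interlace, and otherwise joins them in a decreasing run.

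\emph{Sufficiency of \textup{(a)--(e)}.} Now let $\pi$ be simple of length more than $2$ and satisfy (a)--(e); we must show $\pi\in\Av(2341,4123,3412)$. Since the basis $\{2341,4123,3412\}$ is closed under inversion, so is the class, and inversion reflects the hypotheses (a)--(e) across the southwest--northeast diagonal (exchanging l-r maxes with r-l mins) while carrying $2341$ to $4123$; hence it suffices to rule out copies of $2341$ and of $3412$. Suppose such a copy occurs and place its four points in the cell decomposition furnished by (a). By (b) no two points of a common corner cell are increasing, and by (e) a central cell holds at most one point, so in a copy of $2341$ the three points of the ascending run must occupy three distinct cells, no two of which together form a single corner cell, while in a copy of $3412$ the two ``high'' points and the two ``low'' points split similarly. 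Tracing which cells the four points can lie in, and invoking (c) and (d) to pin down how consecutive corner cells interlace, one finds that the copy would force an ascent between a corner cell and an adjacent corner or central cell precisely of the kind excluded by the four tile shapes — equivalently, one of the four illegal configurations of Figure~\ref{FourIllegals} reappears, or the ascending run escapes the region bounded by the two extremal paths. This contradiction proves the claim.

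The step I expect to be the main obstacle is the bookkeeping of shared cells in the necessity argument. The first two and the last two corner cells are not separated by a central cell, so (c), (d), and (e) must be verified against these boundary tiles by hand, and one must check that two overlapping tiles really do meet in exactly the narrow strip they appear to and not in a larger region (so that the two tiles' data are genuinely consistent there). A related point needing care is confirming that (a)--(e) are equivalent to membership and not merely necessary for it, so that the case analysis in the sufficiency argument is exhaustive: concretely, one should verify that any system of corner and central cells obeying (b)--(e), assembled along an alternating inflection sequence as in (a), reassembles into overlapping tiles of the four permitted types.
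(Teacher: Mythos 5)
Your strategy coincides with the paper's: Lemma~\ref{alternating} gives (a), the tile decomposition via Proposition~\ref{nbefore1} and Corollary~\ref{firstgtrlast} gives the remaining conditions, and the converse is a localization argument. But two steps have genuine gaps. In the necessity of (c) and (d) you attribute everything to ``the list of tile shapes'' and to overlap compatibility, and neither suffices. In a tile of type \tilenla\ or \tilelna\ the two middle corner cells merely occupy an L-shaped pair of decreasing regions; nothing in that picture prevents two consecutive points of one cell with no point of the other between them, which is a nontrivial interval. The exact parallel-alternation interlacing in (c) is forced by \emph{simplicity}, not by the figure. Worse, your argument for (d) delivers only ``interlaces on at most one side,'' whereas (d) asserts ``on exactly one side.'' The missing half --- that a nonempty corner cell interlaces with at least one neighbour --- is precisely the half that needs a separate idea: if $B$ is jointly decreasing with both neighbours, then $B$ together with the extremal point it abuts is a nontrivial interval of $\pi$. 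Tile-overlap bookkeeping cannot produce this.

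The sufficiency argument for pattern avoidance is also a promissory note, and the one mechanism you name is the wrong one: the configurations of Figure~\ref{FourIllegals} concern inflection points of the two extremal paths, and a generic copy of $2341$ in $\pi$ need not produce any of them. What makes this step work is a normalization confining the pattern to a single tile: given a copy of $2341$, replace its `1' by a later, smaller entry so that it becomes a r-l min; the `2', `3', `4' then lie in the handful of cells preceding and exceeding that min, hence inside one tile, and no tile of Figure~\ref{4pointsandrectcontent} contains $2341$ (then appeal to inversion, as you do, for $4123$). A copy of $3412$ is handled by placing its `3' in a cell abutting a l-r max and checking where the `4' and then the `12' could sit. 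Without such a localization, ``tracing which cells the four points can lie in'' is not a case analysis you have actually carried out.
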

\begin{proof}
We first prove that every simple permutation in $\Av(2341,4123,3412)$ satisfies (a)--(e), beginning by observing that (a) is a direct consequence of Lemma~\ref{alternating}.  By (a), the graph of $\pi$ can be decomposed into tiles of one of the four forms shown in Figure~\ref{4pointsandrectcontent}.  All corners in these tiles are decreasing, so (b) must hold.

To prove (c) consider any two consecutive corner cells.  We shall assume that they are neither the first nor last pair of corner cells (these exceptional cases are treated by an almost identical argument).  The two corner cells are separated by some central cell and lie in a tile in which they are the second and third corners of the tile.   If the tile is of type \tilelnb or $\tilenlb$ then these cells form a decreasing sequence.  If the tile is of type $\tilelna$ or $\tilenla$ then they must interlace as a parallel alternation for otherwise one of the cells will contain two elements forming a block or a single element forming a block with the l-r max or r-l min on its boundary, which contradicts simplicity. 


To prove (d) let $A, B, C$ be 3 consecutive corner cells with the middle cell $B$ non-empty.  Consider a tile containing them whose first three corner cells correspond to $A, B, C$ (the case where the last three corner cells of the tile correspond to $A, B, C$ is similar).  If this tile is of  type $\tilelna$ or $\tilenla$ then $A\cup B$ is decreasing while, by the argument of (c), $B$ and $C$ interlace.  If the tile is of type $\tilelnb$ or $\tilenlb$ then $B\cup C$ is certainly decreasing.  But, if $A\cup B$ is also decreasing then $B$, together with its abutting extremal point, would be a non-trivial interval of $\pi$ contradicting simplicity.

For the proof of (e) note first that every central cell $s$ of $\pi$ is the central cell of some tile and we have already observed that such central cells have at most one point.  
If the adjacent corner cells of this tile interlace then the tile has type $\tilelna$ or $\tilenla$ and so $s$ is empty.  Otherwise the adjacent corners form a decreasing sequence and (if this is non-empty) the tile has type $\tilelnb$ or $\tilenlb$ in which case the corners also form, together with $s$, a decreasing sequence.


For the converse let $\pi$ be any permutation satisfying (a)--(e).  Condition (a) shows that the l-r maxes and r-l mins of $\pi$ or $\pi^{-1}$ interlace as shown in Figure \ref {SimpleExample} and conditions (b)-- (e) show that the permutation is an overlapping union of the tiles shown in Figure \ref {4pointsandrectcontent}. 

If $\pi$ contains a copy of 2341 then, by replacing the `1' in this copy by a subsequent smaller point if necessary, we may take the `1' to be a r-l min.  Then (see Figure \ref{SimpleExample}) the points corresponding to the `2', `3', and `4' must lie in the 4 or 5 cells that contain points before and larger than the `1'.  In particular a copy of 2341 is contained in a tile and, from the form of the tiles, this is impossible.  Thus $\pi$ avoids 2341 and, by a similar argument also avoids 4123.

Suppose now that $\pi$ contains a copy of  3412.  The `3' in this copy must be contained in a cell associated with a l-r max because points in other types of cell are never followed by a smaller increasing pair of points.  The `4' in this copy is not contained in the same cell as the `3' nor in the immediately succeeding central cell (since, from the form of the tiles, the entries in adjacent cells form a decreasing sequence).  Hence the only location where the `1' and `2' can be situated is in the corner cell following the one that contains the `3', but that is impossible as cells are decreasing.

To show $\pi$ is simple, we again seek a contradiction and suppose that it contains a nontrivial block $B$.  The subsequence of $\pi$ consisting of all l-r maxes and all r-l mins is isomorphic to a simple permutation (see Figure \ref{SimpleExample}).  So if $B$ contains more than one point of this subsequence it must contain every point  and we would have $B=\pi$ which contradicts nontriviality.  Otherwise $B$ must be contained in a tile, but the tiles themselves are easily seen to be simple.  This final contradiction finishes the proof of the converse.
\end{proof}

\section{Enumeration of simple permutations in $\Av(2341, 4123, 3412)$}

As we saw in the previous section simple permutations  in $\Av(2341, 4123, 3412)$ of length more than 2  have their leftmost inflection point associated with either a l-r max or a r-l min and, as these two types are related by an inversion, there are equal numbers of each in every length.  So we shall enumerate those whose leftmost inflection is associated with a l-r max and then double the result.

We shall obtain the generating function of this set as a sum of terms,
%
with a typical term counting simple permutations in which there are $n$ extremal points (and therefore $n$ corner cells), a fixed set of $k$ interlacing corner pairs, and $t$ central cells (lying between non-interlacing corner cells) that can have 0 or 1 point.  The set of simple permutations of such a type is enumerated by the generating function
\begin{equation}\label{typicalterm}x^ny^kz^t\end{equation}
where $y = x^2/(1-x^2)$ and $z = 1 + x$.  This is because every interlacing pair of corner cells contributes some positive even number of points to the permutation while each central cell between non-interlacing corners contributes 0 or 1 point to the permutation.

Note that there are $n-3$ central cells because there is no central cell between the first and last pairs of corner cells.  So the value of $t$ depends on whether the first pair of corner cells and the last pair are among the set of $k$ interlacing pairs.  There are 3 different cases:

\begin{enumerate}
\item Both the first and last pairs of corner cells interlace.  Here $t=n-3-(k-2)=n-k-1$.
\item Only the first pair or the last pair of corner cells interlaces.  Here $t=n-3-(k-1)=n-k-2$.
\item Neither the first or the last pairs of corner cells interlace.  Here $t=n-3-k=n-k-3$.
\end{enumerate}

To find the number of choices for the $k$ interlacing pairs of corner cells in each of these 4 cases we make use of the following well known result.

\begin{lemma}
The number of ways of picking $\ell$ non-overlapping pairs $(i,i+1)$ from $\{1,\ldots,m\}$ is
\[\binom{m-\ell}{\ell}\]
\end{lemma}

%
In the first case of the above 3 possibilities two of the $k$ pairs are already chosen and the remaining $k-2$ pairs have to be chosen from the interior $n-4$ corner cells: this can be done in $\binom{n-4-(k-2)}{k-2}$ ways by the previous lemma.  
Similarly the second and third cases give, respectively, $2\binom{n-3-(k-1)}{k-1}$ and $\binom{n-2-k}{k}$ choices for selecting the $k$ interlacing pairs of corner cells.

Hence, for fixed $n$, the sum of all the terms in expression \eqref{typicalterm} over all choices of $k$ interlacing corner pairs is
\[\left(
{n-k-2\choose k-2} z^{n-k-1} + 2 {n-k-2 \choose k-1} z ^{n-k-2} + {n-k-2 \choose k} z^{n-k-3}\right)x^ny^k.
\]

So we need to sum the above expression 
over $n\ge 4$ (since there are no simples for $n\le3$) and $k\ge0$.  
This double summation is first summed over $n$ using the binomial expansion and then over $k$ as a geometric series (taking care of the boundary cases $k=0$ and $k=1$ separately).  The end result is
%

$$
\frac{x^4 z}{1-x z} + \frac{x^4 y}{(1 - x z)^2} + \frac{2 x^4 y z}{1 - x z} 
           +\frac{ x^4 y^2 z  (1 - x z + x)^2}{ (1 - x z)^2 (1- x z - x^2 y z) }.
$$
Expressing this in terms of $x$  and, multiplying by 2, we obtain
\begin{theorem}\label{Av234141233412enumeration}
The generating function for simple permutations of length greater than or equal to 4 in $\Av(2341, 4123, 3412)$ is
$$
 \frac{2(x^4 + x^6 + x^9)}{ (1 - x^2) (1 - 2 x + x^3 - x^4)}
$$
\end{theorem}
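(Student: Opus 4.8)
The plan is to enumerate these simple permutations directly from the structural description furnished by Theorem~\ref{sAv(2341-4123-3412}. That theorem, together with the inversion symmetry noted above, tells us that a simple permutation of length more than $2$ in the class is, up to replacing $\pi$ by $\pi^{-1}$, an overlapping union of the tiles of Figure~\ref{4pointsandrectcontent}, and is determined by three discrete pieces of data: the number $n$ of extremal points (equivalently, of corner cells), the choice of which consecutive pairs of corner cells interlace, and which of the central cells between non-interlacing corners carry their single allowed point. First I would read off the local generating-function contributions imposed by conditions (b)--(e): the $n$ extremal points contribute $x^n$; by (c) an interlacing pair of corner cells is a parallel alternation in the sense of Figure~\ref{parallelalternations} and so contributes a positive even number of points, giving a factor $y=x^2/(1-x^2)$; and by (e) a central cell between non-interlacing corners contributes $0$ or $1$ point, a factor $z=1+x$, while a central cell between interlacing corners is forced empty. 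Hence the simple permutations with a fixed such skeleton having $k$ interlacing pairs and $t$ active central cells are counted by $x^ny^kz^t$, and only finitely many skeletons occur for each pair $(n,k)$.

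Next I would determine $t$ and the number of admissible skeletons. Since there is no central cell between the first two or the last two corner cells, an interior interlacing pair absorbs one of the $n-3$ central cells while an interlacing first or last pair does not; splitting on whether the first and last pairs interlace gives $t=n-k-1$, $t=n-k-2$, or $t=n-k-3$ in the three cases. In each case the set of interlacing pairs is a choice of $k$ non-overlapping consecutive pairs among the $n$ corner cells subject to the appropriate endpoint constraint, so by the cited lemma (after deleting the forced or forbidden endpoint cells) the number of choices is $\binom{n-k-2}{k-2}$, $2\binom{n-k-2}{k-1}$, and $\binom{n-k-2}{k}$, respectively. Thus the contribution of all skeletons with a given $n$ is
\[
\left(\binom{n-k-2}{k-2}z^{n-k-1}+2\binom{n-k-2}{k-1}z^{n-k-2}+\binom{n-k-2}{k}z^{n-k-3}\right)x^ny^k,
\]
and it remains to sum this over $n\ge 4$ and $k\ge 0$.

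Finally I would evaluate the double sum. For fixed $k$, each of the three families is collapsed by summing over $n$ via the single-variable identity $\sum_{m}\binom{m-\ell}{\ell}w^m=w^{2\ell}/(1-w)^{\ell+1}$ applied with $w=xz$, which turns the $n$-sum into a rational function whose $k$-dependence is geometric; summing over $k\ge 0$ then requires handling the degenerate terms $k=0$ and $k=1$ (where $\binom{n-k-2}{k-2}$ or $\binom{n-k-2}{k-1}$ vanishes) separately. The result of this computation is
\[
\frac{x^4z}{1-xz}+\frac{x^4y}{(1-xz)^2}+\frac{2x^4yz}{1-xz}+\frac{x^4y^2z(1-xz+x)^2}{(1-xz)^2(1-xz-x^2yz)}.
\]
Substituting $y=x^2/(1-x^2)$ and $z=1+x$, clearing denominators, and multiplying by $2$ to account for the two inversion-symmetric families then yields the claimed rational function $2(x^4+x^6+x^9)/\big((1-x^2)(1-2x+x^3-x^4)\big)$. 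I expect the only genuine difficulty to be bookkeeping: correctly isolating the $k=0,1$ boundary contributions in the summation, and then carrying the four-term expression through the algebraic simplification without slipping; as a sanity check one can compare the first several coefficients against a direct enumeration of the small simple permutations described by Theorem~\ref{sAv(2341-4123-3412}.
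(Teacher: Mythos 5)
Your proposal follows the paper's own derivation essentially step for step: the same $x^ny^kz^t$ encoding with $y=x^2/(1-x^2)$ and $z=1+x$, the same three-case analysis of whether the first and last corner pairs interlace, the same binomial counts via the non-overlapping-pairs lemma, the same four-term intermediate expression, and the same final doubling for the inversion symmetry. It is correct and matches the paper's argument.
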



\section{The enumeration of $\Av(2341, 4123)$}

We now only have to assemble the pieces we have developed.  We begin by determining the allowed inflations in this class.

\begin{proposition}\label{prop-allowed-inflations}
Let $\sigma\in\Av(2341, 4123)$ be a simple permutation of length $m\ge 4$.  The inflation $\sigma[\alpha_1,\dots,\alpha_m]$ lies in $\Av(2341, 4123)$ if and only if every $\alpha_i$ is a decreasing sequence.
\end{proposition}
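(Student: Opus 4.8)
The plan is to prove the two directions separately. For the easy direction, suppose some $\alpha_i$ is not decreasing, so $\alpha_i$ contains the pattern $12$. Since $\sigma$ is simple of length $m\ge 4$, it contains $2341$ or $4123$? No — $\sigma$ itself lies in $\Av(2341,4123)$. Instead I would argue as follows: a simple permutation of length $m \ge 4$ necessarily contains $123$ or $3412$ (indeed, every simple permutation of length $\ge 4$ contains one of $2413, 3142$, and a short check handles the rest), and more usefully, it contains a point that can play the role of the ``$1$'' together with an increasing pair before it forming a $312$-type configuration, or a point with a decreasing pair after it. Concretely: since $\sigma$ is simple of length $\ge 4$, it is not monotone, so it has an ascent $\sigma(j)<\sigma(j+1)$; because $\sigma\ne 12$ it has either an entry to the right of this ascent that is smaller than $\sigma(j)$, giving a $231$ pattern $\sigma(j)\sigma(j+1)\sigma(\ell)$, or an entry to the left larger than $\sigma(j+1)$, giving a $312$. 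Inflating the ascending pair's position by a non-decreasing $\alpha_i$ and using the surrounding $231$ (resp.\ $312$) entry produces $2341$ (resp.\ $4123$). So I need the combinatorial lemma: \emph{every simple permutation of length $\ge 4$ contains a $231$ or a $312$}; this follows since avoiding both $231$ and $312$ forces the permutation to be layered/monotone, hence not simple for length $\ge 4$.

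For the converse — the substantive direction — suppose every $\alpha_i$ is decreasing and show $\sigma[\alpha_1,\dots,\alpha_m]\in\Av(2341,4123)$. Write $\pi=\sigma[\alpha_1,\dots,\alpha_m]$ and suppose for contradiction $\pi$ contains a copy of $2341$ (the $4123$ case being symmetric under the reverse-complement, which preserves the class and maps $2341\leftrightarrow 4123$). The four points of the copy fall into intervals $\alpha'_{i_1},\dots,\alpha'_{i_4}$. If all four indices are equal, $\alpha'_{i_1}\cong\alpha_{i_1}$ would contain $2341$, impossible since each $\alpha_i$ is decreasing. If the four points lie in at least two distinct intervals, then picking one representative point from each distinct interval and observing each $\alpha_i$ is order-isomorphic to a single point within $\sigma$'s structure, I get that $\sigma$ itself contains a pattern: specifically the multiset of indices $i_1\le i_2\le i_3\le i_4$, after collapsing repeats (each repeat within one decreasing $\alpha_i$ contributes a descent, which is consistent with both $2341$ and $4123$ only in restricted ways), yields a copy of $2341$ or $4123$ in $\sigma$ — contradiction. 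The care needed here is the bookkeeping: within the pattern $2341$, positions $1,2,3$ are increasing and position $4$ is the minimum; two of these entries can coincide in the same $\alpha_i$ only if they are consecutive in the pattern \emph{and} form a descent there, i.e.\ only positions $3,4$ (the ``$4$'' and the ``$1$''). So at most one collapse happens, between the ``$4$'' and ``$1$'', leaving at least three distinct intervals $i_1<i_2<i_3$ with $\sigma$-values in pattern $2\,3\,(41)$ or — if no collapse — four distinct intervals in pattern $2341$; either way $\sigma\notin\Av(2341,4123)$ is contradicted after noting that $2\,3\,41$ on three indices already means $\sigma$ restricted to $i_1,i_2,i_3$ plus any point of the fourth interval gives $2341$.

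The main obstacle I anticipate is precisely this collapsing analysis — making rigorous the claim that when a forbidden pattern in the inflation spreads across several intervals, it pushes a forbidden (or equivalent) pattern down into $\sigma$, given that the only ``help'' the decreasing $\alpha_i$'s provide is a descent. I would organize it by: (i) for each of $2341$ and $4123$, list which pairs of pattern-positions are descents ($2341$: only the pair (position 3, position 4); $4123$: only (position 1, position 2)); (ii) conclude at most one pair of the four chosen points can share an $\alpha_i$; (iii) in each subcase read off the induced pattern on the $\ge 3$ distinct intervals and check it, together with a free point from the repeated interval if needed, reconstitutes $2341$ or $4123$ in $\sigma$. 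A clean alternative that sidesteps casework: use the reverse-complement symmetry to handle $4123$ from $2341$, and note $\Av(2341,4123)$ is sum closed (stated in the excerpt) so inflating by decreasing sequences — which are themselves sums of $1$'s composed with skew pieces — reduces to checking inflation at a single position by a single $21$, i.e.\ it suffices to show that replacing one point of $\sigma$ by a descending pair never creates $2341$ or $4123$; that is a finite, local check on the four possible pattern-positions the doubled point could occupy.
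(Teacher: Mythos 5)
Your division of labor is inverted relative to where the real work lies, and the direction you call ``easy'' is the one with a genuine gap. To prove that a non-decreasing $\alpha_i$ forces a forbidden pattern, you must show that the \emph{specific} position $i$ being inflated sits inside a suitable pattern of $\sigma$ --- concretely, that $\sigma(i)$ is the `$2$' or `$3$' of some copy of $231$, or the `$1$' or `$2$' of some copy of $312$. Your argument only produces \emph{some} ascent of $\sigma$ lying in \emph{some} copy of $231$ or $312$ and then ``inflates the ascending pair's position,'' but you do not get to choose which position carries the non-decreasing $\alpha_i$; nothing you have said rules out that the given index $i$ avoids all four roles. (Your auxiliary dichotomy is also false as stated: in the simple permutation $3142$ the unique consecutive ascent $1,4$ has no smaller entry to its right and no larger entry to its left.) The paper closes exactly this gap by arguing in the contrapositive: if $\sigma(i)$ plays none of those four roles, then the entries to the right of $\sigma(i)$ must form a block below it followed by a block above it, and similarly in the other three directions, which forces $\sigma$ to be sum or skew decomposable --- contradicting simplicity. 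That structural step, which is where the hypothesis ``simple of length $m\ge 4$'' is actually used, is absent from your proposal.

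Your converse direction (all $\alpha_i$ decreasing implies the inflation avoids $2341$ and $4123$) is essentially sound and matches the paper's one-line argument, though your collapsing analysis is more complicated than necessary: two entries of a pattern occurrence lying in a common interval $\alpha_i'$ must be adjacent both in position \emph{and in value} among the four pattern entries, since an interval is contiguous in both senses, and for $2341$ and $4123$ the only such pairs are ascents, which cannot occur inside a decreasing $\alpha_i'$. Hence each $\alpha_i'$ contributes at most one entry and the pattern would already occur in $\sigma$. In particular your case ``$2\,3\,(41)$'' is vacuous (the values $4$ and $1$ are not value-adjacent, so the `$2$' and `$3$' would be dragged into the same interval, a contradiction), and the sentence resolving that case by adjoining ``any point of the fourth interval'' does not parse, since in that case there is no fourth interval.
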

\begin{proof}
First, if each $\alpha_i$ is decreasing then any copy of $2341$ or $4123$ in $\sigma[\alpha_1,\dots,\alpha_m]$ could contain at most one entry from each $\alpha_i$, which is impossible because $\sigma$ itself avoids $2341$ and $4123$.

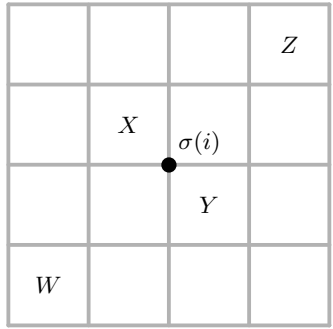
\begin{figure}
\begin{footnotesize}
\begin{center}
\psset{xunit=0.007in, yunit=0.007in}
\psset{linewidth=0.005in}
\begin{pspicture}(0,0)(240,240)
\multirput(0,0)(60,0){5}{\psline[linecolor=darkgray,linestyle=solid,linewidth=0.02in]{c-c}(0,0)(0,240)}
\multirput(0,0)(0,60){5}{\psline[linecolor=darkgray,linestyle=solid,linewidth=0.02in]{c-c}(0,0)(240,0)}
\pscircle*(120,120){0.04in}
\uput[45](120,120){$\sigma(i)$}
\rput[c](30,30){${W}$}
\rput[c](90,150){${X}$}
\rput[c](150,90){${Y}$}
\rput[c](210,210){${Z}$}
\end{pspicture}
\end{center}
\end{footnotesize}
\caption{The situation in the proof of Proposition~\ref{prop-allowed-inflations}: $\sigma(i)$ is not the `$2$' or `$3$' in a copy of $231$, nor the `$2$' in a copy of $312$, nor the `$1$' in a copy of $312$.}
\label{decreasinginflations}
\end{figure}

Now take $\sigma[\alpha_1,\dots,\alpha_m]\in\Av(2341,4123)$ and suppose to the contrary that $\alpha_i$ contains $12$ for some index $i$.  Then $\sigma(i)$ must not be the `$2$' or the `3' in a copy of $231$ (in $\sigma$), because that would lead to a copy of $2341$ in $\sigma[\alpha_1,\dots,\alpha_m]$.  Similarly, $\sigma(i)$ is neither the '$1$' nor the `$2$' in a copy of $312$.  Because $\sigma(i)$ is not the `$2$' in a copy of $231$, the entries of $\sigma$ to the right of $\sigma(i)$ must consist of a series of entries below $\sigma(i)$ followed by a entries of terms above $\sigma(i)$.  The other three conditions imply similar restrictions on the entries below $\sigma(i)$, the entries to the left of $\sigma(i)$, and the entries above $\sigma(i)$.  These restrictions are displayed in Figure~\ref{decreasinginflations} (following our conventions, unlabeled cells are empty in this diagram) which shows that $\sigma$ is 
sum decomposable if either $W$ or $Z$ is non-empty or skew decomposable if $W$ and $Z$ are both empty; in particular $\sigma$ is not simple.
\end{proof}

Because $\Av(2341, 4123)$ is sum closed, its generating function, which we label $f$, satisfies $f=1/(1-g)$, where $g$ denotes the generating function for nonempty sum indecomposable permutations in the class.  Therefore we need only determine $g$.  Because we will often be inflating permutations by decreasing sequences, it is convenient to define $d=x/(1-x)$.  Using Corollary \ref{simpletypes} the set of nonempty sum indecomposable permutations in $\Av(2341,4123)$ is then the union of
\begin{enumerate}
\item[(i).] the permutation $1$,
\item[(ii).] inflations of $21$ by permutations in $\Av(123)$, where the first entry is inflated by a skew indecomposable permutation,
\item[(iii).] inflations of  simple permutations of length at least $4$ in $\Av(123)=\Av(2341, 4123, 123)$ by decreasing sequences,
\item[(iv).] inflations of  simple permutations of length at least $4$ in $\Av(2341, 4123, 3412)$ by decreasing sequences, and
\item[(v).] inflations of $5274163$ by decreasing sequences.
\end{enumerate}
These sets are disjoint except for an intersection between those of types (iii) and (iv).  This intersection consists of the inflations of $123$-avoiding parallel alternations (see Figure~\ref{fig-example-simples}), of which there are two of every even length, so we have
$$
\mbox{g.f. for (iii) $\cap$ (iv)}=\frac{2d^4}{1-d^2}.
$$

Notice that the sets of type (i)--(iii) together comprise the set of non-empty sum indecomposable permutations of $\Av(123)$.  As every permutation is either sum indecomposable or sum decomposable, we can obtain the generating functions of these permutations by subtracting the generating function of sum decomposable permutations from the generating function for the  non-empty permutations of $\Av(123)$.  The sum decomposable permutations in $\Av(123)$ are inflations of $12$ by decreasing sequences, so we see
$$
\mbox{g.f. for (i)--(iii)}=c-d^2,
$$
where $c=(1-2x-\sqrt{1-4x})/(2x)$ is the generating function for the non-empty permutations in $\Av(123)$.

Finally, Theorem~\ref{Av234141233412enumeration} gives us the generating function for sum indecomposable permutations of type (iv):
$$
\mbox{g.f. for (iv)}=\frac{2(d^4+d^6+d^9)}{(1-d^2)(1-2d+d^3-d^4)},
$$
and sum indecomposable permutations of type (v) are counted by $d^7$.

Putting all these expressions together gives 

\begin{theorem}
The generating function $f$ for $\Av(2341, 4123)$ has the form $f=1/(1-g)$ where 
\begin{align*}
 g=&\frac{(1-2x-\sqrt{1-4x})}{2x}\\
&-\frac{(1-13x+74x^2-247x^3+539x^4-805x^5+834x^6-595x^7+283x^8-80x^9+8x^{10})x^2}{
(1-x)^7(1-2x)(1-6x+12x^2-9x^3+x^4)}
\end{align*}
\end{theorem}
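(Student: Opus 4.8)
The plan is to combine the partial generating functions already computed for the families (i)--(v) of nonempty sum indecomposable permutations in $\Av(2341,4123)$, then carry out the substitution $d = x/(1-x)$ and the ensuing algebraic simplification. Since the class is sum closed, we have $f = 1/(1-g)$, so everything reduces to finding $g$, the generating function for the nonempty sum indecomposable permutations.

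First I would record that the sets (i)--(v) exhaust the nonempty sum indecomposable permutations, the only overlap being between types (iii) and (iv); hence by inclusion--exclusion
$$
g = \left(c - d^2\right) + \frac{2(d^4+d^6+d^9)}{(1-d^2)(1-2d+d^3-d^4)} + d^7 - \frac{2d^4}{1-d^2},
$$
where $c = (1-2x-\sqrt{1-4x})/(2x)$ is the generating function for nonempty members of $\Av(123)$, and the remaining three summands are, respectively, the contribution of (iv) furnished by Theorem~\ref{Av234141233412enumeration} with $x$ replaced by $d$, the contribution $d^7$ of (v), and (subtracted) the contribution $2d^4/(1-d^2)$ of the overlap (iii)$\,\cap\,$(iv) recorded above.

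Next I would substitute $d = x/(1-x)$ throughout. The term $c$ is already a function of $x$ and is carried over unchanged, supplying the $\sqrt{1-4x}$ part of the final answer. For the rational part, the two identities
$$
1-d^2 = \frac{1-2x}{(1-x)^2}, \qquad 1 - 2d + d^3 - d^4 = \frac{1-6x+12x^2-9x^3+x^4}{(1-x)^4},
$$
both checked by clearing denominators, account for the factors $1-2x$ and $1-6x+12x^2-9x^3+x^4$ appearing in the denominator of $g$, while the various powers $d^j$ contribute the remaining powers of $1-x$, the highest of which yields the $(1-x)^7$.

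The main obstacle --- indeed essentially the only work --- is the bookkeeping in the final step: placing the rational part of $g$ over the common denominator $(1-x)^7(1-2x)(1-6x+12x^2-9x^3+x^4)$, expanding, and collecting coefficients to identify the numerator as $\left(1-13x+74x^2-247x^3+539x^4-805x^5+834x^6-595x^7+283x^8-80x^9+8x^{10}\right)x^2$. This computation is lengthy but entirely mechanical, and is most safely confirmed with a computer algebra system; no further ideas are required.
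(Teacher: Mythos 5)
Your proposal is correct and follows essentially the same route as the paper: it assembles $g$ by inclusion--exclusion from the contributions of types (i)--(v) minus the (iii)\,$\cap$\,(iv) overlap, substitutes $d=x/(1-x)$, and clears denominators to reach the common denominator $(1-x)^7(1-2x)(1-6x+12x^2-9x^3+x^4)$. Your two auxiliary identities for $1-d^2$ and $1-2d+d^3-d^4$ are correct, and the paper likewise treats the final collection of terms as a routine (computer-assisted) simplification.
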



Further calculations with a computer algebra package such as \textsf{Singular} shows that $f$  satisfies the quadratic

$$
\begin{array}{l}
\left(144x^{25}-3524x^{24}+38648x^{23}-259931x^{22}+1231750x^{21}-4420385x^{20}+12533805x^{19}\right.
\\
\quad-28844031x^{18}+54839380x^{17}-87179343x^{16}+116833299x^{15}-132706667x^{14}
\\
\quad+128169929x^{13}-105396633x^{12}+73761400x^{11}-43835832x^{10}+22029889x^9
\\
\quad-9301917x^8+3269458x^7-944215x^6+220007x^5-40293x^4+5578x^3-548x^2\\
\quad\left.+34x-1\right)f^2
\\
+\left(-48x^{25}+1380x^{24}-17556x^{23}+134339x^{22}-708318x^{21}+2775400x^{20}-8464162x^{19}\right.
\\
\quad+20701382x^{18}-41428652x^{17}+68785738x^{16}-95667058x^{15}+112183057x^{14}
\\
\quad-111372132x^{13}+93798415x^{12}-67025068x^{11}+40562377x^{10}-20710152x^9+8865879x^8
\\
\quad\left.-3153464x^7+920002x^6-216192x^5+39867x^4-5548x^3+547x^2-34x+1\right)f
\\
+\left(4x^{25}-132x^{24}+1921x^{23}-16624x^{22}+97464x^{21}-416740x^{20}+1361690x^{19}\right.
\\
\quad-3508914x^{18}+7290078x^{17}-12404442x^{16}+17480077x^{15}-20556472x^{14}+20271017x^{13}
\\
\quad-16800814x^{12}+11703343x^{11}-6835800x^{10}+3331377x^9-1343826x^8+443390x^7
\\
\quad\left.-117616x^6+24459x^5-3838x^4+427x^3-30x^2+x\right)
\\
=0.
\end{array}
$$

The \emph{growth rate} of the class $\C$ is the limit of $\sqrt[n]{|\C_n|}$ as $n\rightarrow\infty$ (if this limit exists).  In our case, this is the reciprocal of the least positive root of the discriminant of the minimal polynomial above, which is $4$, the same as the growth rate of $\Av(123)$.

\bibliographystyle{acm}
\bibliography{./refs}

\end{document}